\numberwithin{equation}{section}
\newtheorem{theorem}{Theorem}[section]
\newtheorem{lemma}[theorem]{Lemma}
\newtheorem{corollary}[theorem]{Corollary}
\newtheorem{remark}[theorem]{Remark}
\newtheorem{example}[theorem]{Example}
\title{Characterizations of 2-local derivations and local Lie derivations on some algebras}
\author{\begin{tabular}{c} Jun He\footnote{Corresponding author.
E-mail address: 15121034934@163.com} , Jiankui Li, Guangyu An, and Wenbo Huang
\\{\small\it Department of Mathematics, East China University of
Science and Technology}\\
{\small\it Shanghai 200237, China}
\end{tabular}}
\date{}
\begin{document}
\maketitle \abstract We prove that every 2-local derivation from the
algebra $M_n(\mathcal{A})(n>2)$ into its bimodule $M_n(\mathcal{M})$ is a derivation, where $\mathcal{A}$ is a unital Banach algebra and $\mathcal{M}$ is a unital $\mathcal{A}$-bimodule such that each Jordan derivation from $\mathcal{A}$ into $\mathcal{M}$ is an inner derivation, and that every 2-local derivation on a C*-algebra with a faithful
traceable representation is a derivation. We also characterize local and 2-local Lie
derivations on some algebras such as von Neumann algebras, nest algebras, Jiang-Su algebra and UHF algebras.

\
{\textbf{Keywords:}} 2-local derivation, local Lie derivation, 2-local Lie derivation, matrix algebra, von Neumann algebra

\
{\textbf{Mathematics Subject Classification(2010):}} 46L57; 47B47; 47C15

\
\section{Introduction}\

Let $\mathcal{A}$ be a Banach algebra and $\mathcal{M}$  an
$\mathcal{A}$-bimodule. We recall that a linear map $D:$ $\mathcal{A}$
$\rightarrow$ $\mathcal{M}$ is called a derivation if
$D(ab)=D(a)b+aD(b)$ for all $a,b\in$ $\mathcal{A}$, and a linear map $D:$ $\mathcal{A}$
$\rightarrow$ $\mathcal{M}$ is called a Jordan derivation if
$D(a^2)=D(a)a+aD(a)$ for all $a\in$ $\mathcal{A}$.
A derivation $D_a$ defined by $D_{a}(x)=ax-xa$ for all $x\in \mathcal{A}$ is called an inner derivation, where $a$ is a fixed element in $\mathcal{M}$.

In \cite{R. Kadison}, R. Kadison introduces the concept of local derivation in the following sense:
a linear mapping $T$ from $\mathcal{A}$ into $\mathcal{M}$ such that
for every $a\in$ $\mathcal{A}$, there exists a derivation $D_a :$
$\mathcal{A}$ $\rightarrow$ $\mathcal{X}$, depending on $a$,
satisfying $T(a)=D_a(a)$. Also in \cite{R. Kadison}, the author proves that each continuous local
derivation from a von Neumann algebra into its dual Banach
module is a derivation. B. Jonson \cite{B. Johnson} extends the above result
by proving that every local derivation from a C*-algebra
into its Banach bimodule is a derivation. Based on these results,
many authors have studied local derivations on operator algebras, for
example, see in \cite{R. Crist, D. Hadwin, D. Larson, Y. Pang}.

In \cite{P. Semrl} P. $\check{S}$emrl introduces the concept of 2-local derivations. Recall
that a map $\Delta$ : $\mathcal{A}$ $\rightarrow$ $\mathcal{M}$ (not necessarily linear)
is called a 2-local derivation if for each $a,b\in$ $\mathcal{A}$, there exists a derivation
$D_{a,b}$ : $\mathcal{A}$ $\rightarrow$ $\mathcal{M}$ such that  $\Delta(a)=D_{a,b}(a)$ and
$\Delta(b)=D_{a,b}(b)$. Moreover, the author proves that every 2-local derivation
on $B(H)$ is a derivation. In \cite{S. Kim} S. Kim and J. Kim give a short proof of that every 2-local derivation
on the algebra $M_{n}(\mathbb{C})$ is a derivation. Later J. Zhang and H. Li \cite{J. Zhang} extend the above
result for arbitrary symmetric digraph matrix algebras and construct an example of 2-local derivation
which is not a derivation on the algebra of all upper triangular complex $2\times2$ matrices.

In \cite{S. Ayupov 1}, S. Ayupov and K. Kudaybergenov suggest a new technique and prove that every 2-local derivation
on $B(H)$ is a derivation for arbitrary Hilbert space $H$. Then they consider the cases for several kinds of
von Neumann algebras in succession in \cite{S. Ayupov 2,S. Ayupov 3,S. Ayupov 4}, and finally prove that any 2-local derivation on arbitrary von Neumann algebra
is a derivation. Quite recently, in \cite{S. Ayupov 5} the authors study the case for matrix algebras over unital semiprime Banach algebras,
and prove that every 2-local derivation on the algebra $M_{2^{n}}(\mathcal{A})$, $n\geq2$, is a derivation, where $\mathcal{A}$
is a unital semiprime Banach algebra with the inner derivation property.

In Section 2, we improve the above result(\cite[Theorem 2.1]{S. Ayupov 5}) for arbitrary $n>2$.
More specifically, we prove that if $\mathcal{A}$ is a unital Banach algebra and $\mathcal{M}$ is a unital $\mathcal{A}$-bimodule such that each Jordan derivation from $\mathcal{A}$ into $\mathcal{M}$ is an inner derivation, then each 2-local derivation $\Delta$ from $M_n(\mathcal{A})(n>2)$ into $M_n(\mathcal{M})$ is a derivation. Moreover, if we only consider the case $\mathcal{M}=\mathcal{A}$, then the assumption of innerness can be relaxed to spatial innerness. That is, if $\mathcal{A}$ is a unital Banach algebra such that each Jordan derivation on $\mathcal{A}$ is a derivation and for each derivation $D$ on $\mathcal{A}$, there exists an element $a$ in $\mathcal{B}$ such that $D(x)=[a,x]$ for all $x\in \mathcal{A}$, where $\mathcal{B}$ is an algebra containing $\mathcal{A}$, then each 2-local derivation on $M_n(\mathcal{A})(n>2)$ is a derivation. Based on these, we obtain some applications. We also prove
that every 2-local derivation on a C*-algebra with a faithful traceable representation is a derivation.

Recall that a linear map $\varphi:\mathcal{A}\rightarrow \mathcal{A}$ is called a Lie derivation if $\varphi[a,b]=[\varphi(a),b]+[a,\varphi(b)]$, for all $a,b\in \mathcal{A}$, where $[a,b]=ab-ba$ is the usual Lie product. A Lie derivation $\varphi$ is said to be standard if it can be decomposed as $\varphi=D+\tau$, where $D$ is a derivation on $\mathcal{A}$ and $\tau$ is a linear map from $\mathcal{A}$ into the center of $\mathcal{A}$ such that $\tau[a,b]=0$ for all $a,b\in \mathcal{A}$.

It is natural to ask under which conditions each Lie derivation is standard. This problem has been studied by many authors. M. Mathieu and A. Villena \cite{M. Mathieu} prove that each Lie derivation on a C*-algebra is standard. W. Cheung \cite{W. Cheung} characterizes Lie derivations on triangular algebras. F. Lu \cite{F. Lu3} proves that each Lie derivation on a CDCSL(completely distributed commutative subspace lattice) algebra is standard.

Obviously, one can define local and 2-local Lie derivations in a similar way as the local and 2-local derivations. In \cite{L. Chen}, L. Chen, F. Lu and T. Wang prove that every local Lie derivation on $B(X)$ is a Lie derivation, where $X$ is a Banach space of dimension exceeding 2. Also in this paper \cite{L. Chen}, the authors characterize 2-local Lie derivations on $B(X)$. Later, L. Chen, and F. Lu \cite{L. Chen2} prove that every local Lie derivation from a nest algebra $alg \mathcal N$ into $B(H)$ is a Lie derivation, where $\mathcal N$ is a nest on the Hilbert space $H$. Quite recently, L. Liu \cite{L. Liu} characterizes 2-local Lie derivations on a semi-finite factor von Neumann algebra with dimension greater than 4.

In Section 3, we study local and 2-local Lie derivations on some algebras by using a new technique. On the algebras including factor von Neumann algebras, finite von Neumann algebras, nest algebras, UHF(uniformly hyperfinite) algebras, and the Jiang-Su algebra, we prove that every local Lie derivation is a Lie derivation. On the algebras including factor von Neumann algebras, UHF algebras, and the Jiang-Su algebra, we prove that every 2-local Lie derivation is a Lie derivation. Besides, for a finite von Neumann algebra $\mathcal{A}$ which is not a factor, we construct an example of 2-local Lie derivation but not a Lie derivation on $\mathcal{A}$.

\section{2-Local derivations}\

Through this paper, we denote by $M_n(\mathcal{A})$ the set of all matrices $(x_{i,j})_{n\times n}$, where $x_{i,j}\in\mathcal{A}$. Clearly, if $\mathcal{A}$ is a Banach algebra, then so is $M_n(\mathcal{A})$, and if $\mathcal{M}$ is an $\mathcal{A}$-bimodule, then $M_n(\mathcal{M})$ is also an $M_n(\mathcal{A})$-bimodule.

\begin{theorem}
Let $\mathcal{A}$ be a unital Banach algebra and $\mathcal{M}$ be a unital $\mathcal{A}$-bimodule.
If each Jordan derivation from $\mathcal{A}$ into $\mathcal{M}$ is an inner derivation, then each 2-local derivation $\Delta$ from $M_n(\mathcal{A})(n>2)$ into $M_n(\mathcal{M})$ is a derivation.
\end{theorem}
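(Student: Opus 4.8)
The plan is to reduce the statement to a question about inner derivations and then to fight through an additivity argument, which I expect to be the real difficulty. First I would establish the structural fact that every derivation $D$ from $M_n(\mathcal{A})$ into $M_n(\mathcal{M})$ is inner, i.e. of the form $D(x)=mx-xm$ for a fixed $m\in M_n(\mathcal{M})$. Writing $e_{ij}$ for the matrix units (with the unit of $\mathcal{A}$ in the $(i,j)$ slot), the standard matrix-unit computation based on $e_{ij}=e_{i1}e_{1j}$ and $\sum_i e_{ii}=1$ shows that, after subtracting the inner derivation implemented by a suitable combination of the $D(e_{k1})e_{1k}$, the remaining derivation is the entrywise extension $\delta^{(n)}$ of a single derivation $\delta:\mathcal{A}\to\mathcal{M}$. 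Since every derivation is in particular a Jordan derivation, the hypothesis gives that $\delta$ is inner, say $\delta(a)=am_0-m_0a$, and then $\delta^{(n)}$ is implemented by the diagonal element with each entry $m_0$. Hence $D$ is inner. Applying this to the derivations supplied by the $2$-local condition, for every pair $a,b\in M_n(\mathcal{A})$ there is $m(a,b)\in M_n(\mathcal{M})$ with $\Delta(a)=m(a,b)a-a\,m(a,b)$ and $\Delta(b)=m(a,b)b-b\,m(a,b)$.

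Next, after recording that $\Delta$ is homogeneous and that $\Delta(1)=0$, I would exploit the diagonal element $a_0=\sum_{k=1}^{n}k\,e_{kk}$, whose entries are distinct central scalars, so that $a_0x-xa_0$ has $(i,j)$-entry $(i-j)x_{ij}$ and is therefore injective on off-diagonal components. Feeding the pair $(a_0,b)$ into the $2$-local relation shows that the off-diagonal entries of $m(a_0,b)$ are determined by $\Delta(a_0)$ alone, hence are independent of $b$; subtracting the inner derivation implemented by this fixed off-diagonal element reduces us to the case $\Delta(a_0)=0$. Re-running the relation then forces every $m(a_0,b)$ to be diagonal, so the diagonal blocks of $\Delta(b)$ collapse to the corner expressions $m(a_0,b)_{ii}\,b_{ii}-b_{ii}\,m(a_0,b)_{ii}$. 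Tracking these corners produces, for each $i$, a map $\mathcal{A}\to\mathcal{M}$ that the derivation identity forces to be a Jordan derivation, and it is here — rather than in the first step — that the full strength of the hypothesis (Jordan derivations, not merely derivations, are inner) is genuinely needed: invoking it lets me absorb these corner maps into a further inner derivation, after which I may assume $\Delta$ annihilates the whole diagonal subalgebra.

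The heart of the argument, and the step I expect to be the main obstacle, is proving that the normalized $\Delta$ is additive; combined with homogeneity this gives linearity. To obtain additivity I would test the $2$-local condition on carefully chosen pairs built from matrix units, comparing $\Delta(x+y)$ with $\Delta(x)$ and $\Delta(y)$ for elements supported in different corners by introducing an auxiliary element whose commutant separates those corners, and then reconstructing the cross terms through the factorizations $e_{ij}=e_{ik}e_{kj}$ with a third index $k$ distinct from $i$ and $j$. This three-index manoeuvre is exactly where the hypothesis $n>2$ is indispensable, and it is consistent with the failure of the statement for $2\times2$ upper-triangular matrices recalled in the introduction.

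Finally, with linearity in hand I would write an arbitrary $x=\sum_{i,j}a_{ij}e_{ij}$ and use additivity to reduce the Leibniz identity $\Delta(xy)=\Delta(x)y+x\Delta(y)$ to its verification on the generating elements $a\,e_{ij}$ with $a\in\mathcal{A}$; the pairwise-inner representation of the first step, together with the normalizations just made, then confirms the identity on these generators and shows that $\Delta$ is a derivation. The two places demanding the most care are the passage through the diagonal corners, where the Jordan-derivation hypothesis is essential, and the additivity step, where the assumption $n>2$ cannot be dispensed with.
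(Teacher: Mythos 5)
Your overall skeleton --- make every derivation of $M_n(\mathcal{A})$ inner, normalize $\Delta$ by subtracting an inner derivation so that it vanishes on a distinguished subalgebra, then kill what remains --- is the same as the paper's, but at the two places where the real work happens your sketch has genuine gaps. First, in your diagonal step you assert that the corner maps $\mathcal{A}\to\mathcal{M}$ are Jordan derivations ``forced by the derivation identity.'' The Jordan \emph{identity} is indeed the easy, pointwise part: if $m$ implements $\Delta$ simultaneously at $ce_{i,i}$ and $c^2e_{i,i}$, then $[\Delta(c^2e_{i,i})]_{i,i}=[m_{i,i},c]c+c[m_{i,i},c]$. But calling the corner map a Jordan derivation presupposes that it is \emph{additive}, and nothing in your Step 2 delivers that: the implementing element $m(a_0,b)_{i,i}$ changes with $b$, so comparing $\Delta((c+d)e_{i,i})$ with $\Delta(ce_{i,i})$ and $\Delta(de_{i,i})$ only produces commutator identities with different implementing elements. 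That additivity is exactly the nontrivial content the paper encapsulates in Lemma 2.3 (imported from \cite{S. Ayupov 5}), so your plan is circular: you defer additivity to Step 3, yet Step 2 already requires it. Note also that your normalization only makes $\Delta$ vanish on the diagonal subalgebra $D_n(\mathcal{A})$, whereas the paper's Lemma 2.3 makes it vanish on $span\{e_{i,j}\}_{i,j=1}^{n}$ as well --- and that extra vanishing is used essentially later (for instance on the shift $v=\sum_{k=1}^{n-1}e_{k+1,k}$ in Lemma 2.7).

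Second, your self-declared ``heart'' (Steps 3--4) is a gesture rather than an argument, and the final step as stated cannot work. The Leibniz identity at a pair $(x,y)$ involves the three points $x$, $y$, $xy$, while 2-locality furnishes a single implementing element for only two points at a time; so ``the pairwise-inner representation confirms the identity on generators'' does not follow, and in this Banach-bimodule generality you cannot fall back on ``linear 2-local $\Rightarrow$ local $\Rightarrow$ derivation,'' since no theorem that local derivations from $M_n(\mathcal{A})$ into $M_n(\mathcal{M})$ are derivations is among the hypotheses. The paper sidesteps both problems by never proving additivity or Leibniz separately: after the Lemma 2.3 normalization it proves, entry by entry, that the normalized map $\delta$ is identically zero (Lemmas 2.4--2.7), whence $\Delta=D_a$ is outright an inner derivation. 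Those lemmas contain the concrete devices your sketch lacks: rescaling by homogeneity so that $e+x_{i,j}$ is invertible, the cross-matrix reduction of Lemma 2.5, the peeling argument with auxiliary units $e_{s,i}$ in Lemma 2.6, and the commutant computation for $v$ in Lemma 2.7. To complete your proposal you would have to supply precisely this kind of machinery; as it stands, both the diagonal step and the additivity step are asserted rather than proved.
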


Let $\{e_{i,j}\}_{i,j=1}^{n}$ be the system of matrix units in $M_n(\mathcal{A})$ and denote by $span\{e_{i,j}\}_{i,j=1}^{n}$ the linear span of the set $\{e_{i,j}\}_{i,j=1}^{n}$. Besides, we denote the subalgebra of all diagonal matrices in $M_n(\mathcal{A})$ by $D_n(\mathcal{A})$.

To prove Theorem 2.1, we need several lemmas. Firstly, we give the following two lemmas. Since the proofs are completely similar as the proofs of Lemma 2.2 and Lemma 2.9 in \cite{S. Ayupov 5},
we omit it.

\begin{lemma}
Let $\mathcal{A}$ be a unital Banach algebra and $\mathcal{M}$ be a unital $\mathcal{A}$-bimodule.
If each derivation from $\mathcal{A}$ into $\mathcal{M}$ is an inner derivation, then each derivation from $M_n(\mathcal{A})$ into $M_n(\mathcal{M})$ is also an inner derivation.
\end{lemma}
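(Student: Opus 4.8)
The plan is to show that an arbitrary derivation $D\colon M_n(\mathcal A)\to M_n(\mathcal M)$ differs from an inner derivation only by the part it induces on the ``diagonal'' copies of $\mathcal A$, and then to invoke the hypothesis on $\mathcal A$ to absorb that part as well. First I would dispose of the matrix units. Using the derivation identity together with $\sum_i e_{i,i}=I$, one checks directly that the element
\[
S=\sum_{i=1}^{n} D(e_{i,1})\,e_{1,i}\in M_n(\mathcal M)
\]
satisfies $D(e_{p,q})=Se_{p,q}-e_{p,q}S$ for all $p,q$; in the verification the term $e_{p,q}D(e_{i,1})$ is rewritten as $D(e_{p,q}e_{i,1})-D(e_{p,q})e_{i,1}$ and the resulting sum telescopes via $\sum_i e_{i,i}=I$. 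Replacing $D$ by the derivation $D'$ given by $D'(X)=D(X)-(SX-XS)$, I obtain a derivation with $D'(e_{i,j})=0$ for every matrix unit.

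Next I would compute $D'$ on the elements $ae_{i,j}$, $a\in\mathcal A$, which together span $M_n(\mathcal A)$. From $e_{i,i}(ae_{i,i})=(ae_{i,i})e_{i,i}=ae_{i,i}$ and $D'(e_{i,i})=0$ it follows that $D'(ae_{i,i})=e_{i,i}D'(ae_{i,i})e_{i,i}$, so this matrix is concentrated in the $(i,i)$ entry and may be written $D'(ae_{i,i})=d_i(a)e_{i,i}$ for a linear map $d_i\colon\mathcal A\to\mathcal M$. Expanding $D'((ab)e_{i,i})=D'\big((ae_{i,i})(be_{i,i})\big)$ with the Leibniz rule and reading off the $(i,i)$ entry shows that each $d_i$ is a derivation of $\mathcal A$ into $\mathcal M$. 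The key observation is that these diagonal derivations all coincide: writing $ae_{i,j}$ both as $(ae_{i,i})e_{i,j}$ and as $e_{i,j}(ae_{j,j})$ and using $D'(e_{i,j})=0$ gives $D'(ae_{i,j})=d_i(a)e_{i,j}=d_j(a)e_{i,j}$, whence $d_i(a)=d_j(a)$ for all $i,j$ (here the unit of $\mathcal A$ sitting in $e_{i,j}$ is used). Thus $d_1=\cdots=d_n=:d$ is a single derivation of $\mathcal A$ into $\mathcal M$.

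At this point the hypothesis enters: $d$ is inner, say $d(a)=ma-am$ for some $m\in\mathcal M$. Putting $\widehat m=\sum_{k}m\,e_{k,k}\in M_n(\mathcal M)$, a direct bimodule computation gives $\widehat m\,(ae_{i,j})-(ae_{i,j})\,\widehat m=(ma-am)e_{i,j}=d(a)e_{i,j}=D'(ae_{i,j})$. Since both sides are linear and the $ae_{i,j}$ span $M_n(\mathcal A)$, we conclude $D'(X)=\widehat m X-X\widehat m$, and therefore $D(X)=(S+\widehat m)X-X(S+\widehat m)$ is inner. I expect the only real bookkeeping to be the repeated use of the left/right module actions of $M_n(\mathcal A)$ on $M_n(\mathcal M)$ — in particular keeping track of which entry of a product carries the module action — while the one genuinely structural step is the coincidence $d_i=d_j$, which collapses the $n$ diagonal derivations to the single derivation to which the hypothesis on $\mathcal A$ applies. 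Note that unitality of $\mathcal A$ and $\mathcal M$ is exactly what makes these matrix-unit manipulations available, and that no restriction on $n$ is needed for this lemma.
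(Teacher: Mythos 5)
Your proof is correct and complete; in particular the telescoping computation for $S=\sum_i D(e_{i,1})e_{1,i}$, the collapse $d_i=d_j$ via the two factorizations of $ae_{i,j}$, and the final absorption by the diagonal element $\widehat m$ all check out. The paper itself omits the proof of this lemma (deferring to Lemma 2.2 of the cited Ayupov--Kudaybergenov paper), and your argument --- perturb by an inner derivation to annihilate the matrix units, show the remainder is a single derivation of $\mathcal A$ acting entrywise, then invoke innerness on $\mathcal A$ --- is exactly the standard route taken in that reference, so there is nothing to add.
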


\begin{lemma}
Let $\mathcal{A}$ be a unital Banach algebra and $\mathcal{M}$ be a unital $\mathcal{A}$-bimodule.
If each Jordan derivation from $\mathcal{A}$ into $\mathcal{M}$ is an inner derivation,
 then for each 2-local derivation $\Delta$ from $M_n(\mathcal{A})(n>2)$ into $M_n(\mathcal{M})$, there exists an element $a\in M_n(\mathcal{M})$ such that $\Delta|_{D_n(\mathcal{A})}=D_a|_{D_n(\mathcal{A})}$ and $\Delta|_{span\{e_{i,j}\}_{i,j=1}^{n}}=D_a|_{span\{e_{i,j}\}_{i,j=1}^{n}}$.
\end{lemma}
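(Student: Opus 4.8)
The plan is to realize $\Delta$ on the prescribed generating set by a single inner derivation, assembled in stages, after recording two facts that hold for every $2$-local derivation. First, by Lemma 2.2 every derivation of $M_n(\mathcal{A})$ into $M_n(\mathcal{M})$ is inner, so each derivation $D_{x,y}$ supplied by the $2$-locality of $\Delta$ has the form $D_b$ for some $b\in M_n(\mathcal{M})$. Second, applying the defining property of $\Delta$ to the pair $(x,x^2)$ produces a derivation $D$ with $\Delta(x)=D(x)$ and $\Delta(x^2)=D(x^2)=D(x)x+xD(x)$, so that $\Delta(x^2)=\Delta(x)x+x\Delta(x)$ for every $x$; this Jordan identity for $\Delta$ will drive the diagonal part of the argument.

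I would begin by fixing $u=\sum_{k=1}^{n}k\,e_{k,k}$ and using it as a probe. For the inner derivation $D_b$ realizing $\Delta$ at a pair $(u,x)$ one computes $[b,u]_{i,j}=(j-i)b_{i,j}$, so the off-diagonal entries of $b$ are pinned down by $\Delta(u)$. Subtracting from $\Delta$ the inner derivation realizing it at $(u,u)$, I may assume $\Delta(u)=0$; then each such $b$ is diagonal, whence for every $x$ there is a diagonal $d_x\in D_n(\mathcal{M})$ with $\Delta(x)=[d_x,x]$. Since $\mathcal{M}$ is unital, $[d_x,e_{i,i}]=0$ gives $\Delta(e_{i,i})=0$; $[d_x,e_{i,j}]=(d_{x,i}-d_{x,j})e_{i,j}$ shows $\Delta(e_{i,j})$ is supported in the $(i,j)$ slot; and $\Delta$ maps $D_n(\mathcal{A})$ into $D_n(\mathcal{M})$.

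Next I treat the two pieces so that one diagonal element serves both. On the matrix units, set $c_{i,j}=\Delta(e_{i,j})_{i,j}\in\mathcal{M}$; a diagonal element with entry-differences $c_{i,j}$ exists once the antisymmetry $c_{i,j}=-c_{j,i}$ and the cocycle relation $c_{i,j}+c_{j,l}=c_{i,l}$ are extracted from $2$-locality on pairs such as $(e_{i,j},e_{j,l})$, via the Leibniz rule and $e_{i,l}=e_{i,j}e_{j,l}$. On the diagonal, for each $i$ the corner map $\delta_i(y)=\Delta(y\,e_{i,i})_{i,i}$ inherits the Jordan identity $\delta_i(y^2)=\delta_i(y)y+y\delta_i(y)$ from the fact above; once $\delta_i$ is shown to be additive (again a consequence of $2$-locality) it is a Jordan derivation of $\mathcal{A}$ into $\mathcal{M}$, hence inner by hypothesis, say $\delta_i=[z_i,\,\cdot\,]$. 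Finally I choose one diagonal $w=\mathrm{diag}(w_1,\dots,w_n)\in D_n(\mathcal{M})$ with each $w_i$ implementing $\delta_i$ and $w_i-w_j=c_{i,j}$; adding $w$ to the element removed in the normalization step gives an $a\in M_n(\mathcal{M})$ for which $\Delta|_{D_n(\mathcal{A})}=D_a|_{D_n(\mathcal{A})}$ and $\Delta|_{\mathrm{span}\{e_{i,j}\}}=D_a|_{\mathrm{span}\{e_{i,j}\}}$.

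The hard part is twofold and stems from the fact that a $2$-local derivation is not assumed additive. First, every relation above — the cocycle identity for the $c_{i,j}$, the additivity of $\Delta$ on $D_n(\mathcal{A})$ and of the corner maps $\delta_i$, and the compatibility condition $[c_{i,j},y]=\delta_i(y)-\delta_j(y)$ needed to solve for $w$ — must be produced purely from the defining property of $\Delta$ on cleverly chosen pairs (for instance $y\,e_{i,i}+e_{i,j}$), not from linearity. Second, the diagonal of $a$ is over-determined, being constrained at once by the matrix-unit data and by the corner derivations; the compatibility condition is precisely what lets both constraints be met by a single diagonal element, after correcting the $z_i$ by central elements. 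This delicate bookkeeping is what parallels Lemma 2.9 of \cite{S. Ayupov 5}.
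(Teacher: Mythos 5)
Your outline assembles the right ingredients---normalizing by an inner derivation via Lemma 2.2, probing with $u=\sum_{k}k\,e_{k,k}$ so that implementing elements become diagonal, and routing the diagonal part through the hypothesis that Jordan derivations from $\mathcal{A}$ into $\mathcal{M}$ are inner---but as written it is a plan rather than a proof, and the steps you defer are exactly where the content of the lemma lies. The most serious gap is the additivity of the corner maps $\delta_i$: you assert it is ``again a consequence of $2$-locality,'' but nothing in your argument produces it, and without additivity $\delta_i$ is not a Jordan derivation, so the innerness hypothesis can never be invoked. A second, concrete flaw is your proposed derivation of the cocycle relation $c_{i,j}+c_{j,l}=c_{i,l}$: the derivation $D$ furnished by $2$-locality at the pair $(e_{i,j},e_{j,l})$ does satisfy $D(e_{i,l})=D(e_{i,j})e_{j,l}+e_{i,j}D(e_{j,l})$, but $D$ is only required to agree with $\Delta$ at the two chosen points, so there is no reason that $D(e_{i,l})=\Delta(e_{i,l})$; the Leibniz rule applied to $e_{i,l}=e_{i,j}e_{j,l}$ therefore says nothing about $c_{i,l}$, and this step fails as stated. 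The compatibility condition $[c_{i,j},y]=\delta_i(y)-\delta_j(y)$ and the final assembly of $w$ are deferred in the same way.

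For comparison: the paper does not prove this lemma at all but cites Lemma 2.9 of \cite{S. Ayupov 5}, and the argument intended there normalizes at a \emph{pair} of elements, subtracting the inner derivation that realizes $\Delta$ simultaneously at $u$ and at $v=\sum_{i,j}e_{i,j}$, rather than at $u$ alone. That stronger normalization is what makes the bookkeeping you struggle with disappear: pairing an arbitrary $x$ with $u$ still forces implementing elements to be diagonal, pairing $e_{i,j}$ with $v$ then kills every $\Delta(e_{i,j})$ (so all your $c_{i,j}$ vanish and no cocycle relation is needed), and since $\tilde{y}=\mathrm{diag}(y,\dots,y)$ commutes with $v$ one gets $[\Delta(\tilde{y}),v]=0$, which forces all the corner maps $\delta_i$ to coincide in a single map $\delta_0$. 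After establishing additivity of $\delta_0$ (which still requires a genuine matrix-unit argument, not a one-line appeal to $2$-locality) and combining it with the Jordan identity from the pair $(x,x^2)$, $\delta_0$ is a Jordan derivation, hence inner, say $\delta_0=[m,\cdot\,]$, and $a=a_0+\mathrm{diag}(m,\dots,m)$ serves both the diagonal and the span of matrix units automatically, because $\mathrm{diag}(m,\dots,m)$ commutes with every $e_{i,j}$. Your single-element normalization leaves residual data (the $c_{i,j}$ and the distinct $z_i$) that must then be reconciled by hand, and that reconciliation is precisely what your proposal does not carry out.
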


In following Lemmas 2.4-2.7, the conditions of Theorem 2.1 hold. We consider a 2-local derivation $\delta$ from $M_n(\mathcal{A})$ into $M_n(\mathcal{M})$ such that $\delta|_{D_n(\mathcal{A})}=0$ and $\delta|_{span\{e_{i,j}\}_{i,j=1}^{n}}=0$.

In addition, from the definition, it is easy to see that $\delta$ is homogeneous, i.e. $\delta(\lambda x)=\lambda\delta(x)$, for each $x\in M_n(\mathcal{A})$ and $\lambda\in\mathbb{C}$. Thus, if necessary, we can assume that $\|x_{i,j}\|<1$, and so $e+x_{i,j}$ is invertible in $\mathcal{A}$, where $1\leq i,j\leq n$, $x_{i,j}$ is the $(i,j)-$ entry of $x$ and $e$ is the unit of $\mathcal{A}$.

For each $x=(x_{i,j})_{n\times n}\in M_n(\mathcal{A})$ , we denote by $\widehat{x_{i,j}}$ the matrix in $M_n(\mathcal{A})$ such that the $(i,j)-$ entry is $x_{i,j}$ and the others are zero, i.e.
\[\widehat{x_{i,j}}=e_{i,i}xe_{j,j}=\begin{pmatrix}
0 & \dots & 0 & \dots & 0 \\
\vdots & ~ & \vdots & ~ & \vdots \\
0 & \dots & x_{i,j} & \dots & 0 \\
\vdots & ~ & \vdots & ~ & \vdots \\
0 & \dots & 0 & \dots & 0 \\
\end{pmatrix}\]

\begin{lemma}
For all $x\in M_n(\mathcal{A})$, $[\delta(x)]_{i,i}=0$, $i=1,2,\cdot\cdot\cdot,n$.
\end{lemma}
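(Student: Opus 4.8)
The plan is to exploit the 2-local property by pairing the arbitrary element $x$ with a single, cleverly chosen diagonal matrix on which $\delta$ already vanishes, and then to read off the $(i,i)$-entry of $\delta(x)$ from the element implementing the associated derivation. Since $\delta$ is homogeneous, I may assume $\|x_{i,i}\|<1$, so that $u:=e+x_{i,i}$ is invertible in $\mathcal{A}$. I would then set $w:=\widehat{x_{i,i}}+e_{i,i}$, the diagonal matrix having $u$ in the $(i,i)$ position and zeros elsewhere; in particular $w\in D_n(\mathcal{A})$, so $\delta(w)=0$.

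Next I would apply the definition of a 2-local derivation to the pair $(x,w)$: there is a derivation $D_{x,w}$ from $M_n(\mathcal{A})$ into $M_n(\mathcal{M})$ with $D_{x,w}(x)=\delta(x)$ and $D_{x,w}(w)=\delta(w)=0$. By Lemma 2.2 (every derivation from $\mathcal{A}$ into $\mathcal{M}$, being a Jordan derivation, is inner, hence every derivation from $M_n(\mathcal{A})$ into $M_n(\mathcal{M})$ is inner), there is an element $a\in M_n(\mathcal{M})$ with $D_{x,w}=D_a$, i.e. $D_{x,w}(y)=ay-ya$. The two conditions then read $\delta(x)=ax-xa$ and $aw=wa$.

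The heart of the argument is to extract information from $aw=wa$. Writing this out entrywise and using that $w$ is supported only at $(i,i)$ with invertible entry $u$, I expect to obtain $a_{k,i}\,u=0$ and $u\,a_{i,l}=0$ for all $k,l\neq i$, whence $a_{k,i}=a_{i,l}=0$ for $k,l\neq i$ after cancelling the invertible $u$; and the $(i,i)$-entry of $aw=wa$ gives $a_{i,i}u=ua_{i,i}$, that is $[a_{i,i},x_{i,i}]=0$. Feeding this back into $\delta(x)=ax-xa$, only the $k=i$ term of $[\delta(x)]_{i,i}=\sum_k\bigl(a_{i,k}x_{k,i}-x_{i,k}a_{k,i}\bigr)$ survives, and it equals $a_{i,i}x_{i,i}-x_{i,i}a_{i,i}=0$.

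The main obstacle — and the reason the scalar-field result does not transfer verbatim — is that pairing $x$ with $e_{i,i}$ alone would only force $a$ to be block-diagonal relative to the splitting $\{i\}\cup(\{1,\dots,n\}\setminus\{i\})$ and would leave the residual term $[a_{i,i},x_{i,i}]$, which is a genuine (and generally nonzero) commutator in the module $\mathcal{M}$. The device that removes it is to build the invertible element $u=e+x_{i,i}$ into the $(i,i)$ slot of $w$: invertibility is exactly what lets me cancel $u$ to kill the off-block entries of $a$, while the appearance of $x_{i,i}$ inside $u$ is what converts $aw=wa$ into $[a_{i,i},x_{i,i}]=0$. I should check the routine entrywise computation of $aw=wa$ and recall that $\mathcal{M}$ is a unital bimodule, so that $a_{k,i}u=0$ really does yield $a_{k,i}=0$; finally homogeneity removes the normalization $\|x_{i,i}\|<1$ for an arbitrary $x$.
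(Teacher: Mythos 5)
Your proposal is correct and is essentially the paper's own proof: your $w=\widehat{x_{i,i}}+e_{i,i}$ is exactly the paper's auxiliary element $y=e_{i,i}+\widehat{x_{i,i}}$, and the paper likewise pairs $x$ with this diagonal matrix, uses $\delta(y)=0$ together with invertibility of $e+x_{i,i}$ to kill the entries $a_{i,k}$, $a_{k,i}$ ($k\neq i$), and concludes $[\delta(x)]_{i,i}=[a_{i,i},x_{i,i}]=0$. The only cosmetic difference is that the paper reads $[a_{i,i},x_{i,i}]=0$ off from $[a,y]_{i,i}=0$ while you read it off the $(i,i)$-entry of $aw=wa$, which is the same identity.
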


\begin{proof}
Let $y=e_{i,i}+\widehat{x_{i,i}}$. By lemma 2.2, there exists an element $a$ in $M_n(\mathcal{M})$ such that $\delta(x)=[a,x],\delta(y)=[a,y]$. Since $y\in D_n(\mathcal{A})$, we have $\delta(y)=0$.
For any $k\neq i$,
$$0=[\delta(y)]_{i,k}=[a,y]_{i,k}=\sum_{j=1}^{n}(a_{i,j}y_{j,k}-y_{i,j}a_{j,k})=-y_{i,i}a_{i,k}=-(e+x_{i,i})a_{i,k}.$$
Since $e+x_{i,i}$ is invertible, it follows that $a_{i,k}=0$.
Similarly, $a_{k,i}=0$, for any $k\neq i$. Now we can obtain
$$[\delta(x)]_{i,i}=[a,x]_{i,i}=\sum_{k=1}^{n}(a_{i,k}x_{k,i}-x_{i,k}a_{k,i})=[a_{i,i},x_{i,i}]=[a,y]_{i,i}=0.$$
The proof is complete.
\end{proof}

\begin{lemma}
For each $x\in M_n(\mathcal{A})$, let
\[ y=\begin{pmatrix}
0 & \dots & x_{1,j} & \dots & 0 \\
\vdots & ~ & \vdots & ~ & \vdots \\
x_{i,1} & \dots & x_{i,j} & \dots & x_{i,n} \\
\vdots & ~ & \vdots & ~ & \vdots \\
0 & \dots & x_{n,j} & \dots & 0 \\
\end{pmatrix}\]
i.e. $y=\sum_{k=1}^{n}(\widehat{x_{i,k}}+\widehat{x_{k,j}})-\widehat{x_{i,j}}.$
Then $[\delta(x)]_{i,j}=[\delta(y)]_{i,j}$.
\end{lemma}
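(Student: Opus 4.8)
The plan is to invoke the defining property of the $2$-local derivation for the single pair $(x,y)$, and then to notice that $y$ has been built to agree with $x$ on exactly the data that the $(i,j)$-entry of an inner derivation can detect. Since every Jordan derivation from $\mathcal{A}$ into $\mathcal{M}$ is inner, a derivation being a special Jordan derivation, every derivation from $\mathcal{A}$ into $\mathcal{M}$ is inner as well, and hence by Lemma 2.2 every derivation from $M_n(\mathcal{A})$ into $M_n(\mathcal{M})$ is inner. So, applying the $2$-local property of $\delta$ to the pair $(x,y)$, I would obtain an element $a\in M_n(\mathcal{M})$ with $\delta(x)=[a,x]$ and $\delta(y)=[a,y]$.

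Next I would expand both commutators at the $(i,j)$-entry by the matrix-multiplication formula, obtaining
$$[\delta(x)]_{i,j}=[a,x]_{i,j}=\sum_{k=1}^{n}\bigl(a_{i,k}x_{k,j}-x_{i,k}a_{k,j}\bigr),$$
together with the analogous expression for $[\delta(y)]_{i,j}=[a,y]_{i,j}$ obtained by replacing $x$ with $y$ throughout.

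The key observation, which does essentially all the work, is that the definition $y=\sum_{k=1}^{n}(\widehat{x_{i,k}}+\widehat{x_{k,j}})-\widehat{x_{i,j}}$ forces $y$ to coincide with $x$ along the entire $i$-th row and the entire $j$-th column; that is, $y_{i,k}=x_{i,k}$ and $y_{k,j}=x_{k,j}$ for every $k$, the correction term $-\widehat{x_{i,j}}$ serving only to remove the double-counting of the $(i,j)$-entry. Because the $(i,j)$-entry of $[a,\,\cdot\,]$ involves its argument solely through the $j$-th column (via the terms $a_{i,k}x_{k,j}$) and the $i$-th row (via the terms $x_{i,k}a_{k,j}$), replacing $x$ by $y$ leaves every summand unchanged. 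Thus $[a,x]_{i,j}=[a,y]_{i,j}$, and combining this with the two displayed expansions gives $[\delta(x)]_{i,j}=[\delta(y)]_{i,j}$.

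There is no genuine obstacle here once the setup is correct; the entire argument reduces to the elementary fact that the $(i,j)$-entry of a commutator depends on the second factor only through its $i$-th row and $j$-th column, matched against the bookkeeping that identifies which row and column of $x$ survive in $y$. I would remark that this particular argument does not even need the diagonal-vanishing result of Lemma 2.4; it rests only on the $2$-local property and the innerness supplied by Lemma 2.2.
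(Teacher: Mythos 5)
Your proposal is correct and follows essentially the same route as the paper: apply the 2-local property to the pair $(x,y)$, use Lemma 2.2 (together with the fact that derivations are Jordan derivations) to write $\delta(x)=[a,x]$ and $\delta(y)=[a,y]$ for a single $a\in M_n(\mathcal{M})$, and observe that the $(i,j)$-entry of $[a,\cdot]$ depends only on the $i$-th row and $j$-th column of the argument, on which $x$ and $y$ agree. The paper compresses this into ``direct calculation,'' and your write-up simply makes that calculation explicit; your remark that Lemma 2.4 is not needed here is also consistent with the paper.
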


\begin{proof}
Take an element $a$ from $M_n(\mathcal{M})$ such that $\delta(x)=[a,x]$, and $\delta(y)=[a,y]$. It is easy to verify the above result by direct calculation.
\end{proof}

\begin{lemma}
For all $x\in M_n(\mathcal{A})$, $[\delta(x)]_{i,j}=[\delta(\widehat{x_{i,j}})]_{i,j}$, whenever $i\neq j$.
\end{lemma}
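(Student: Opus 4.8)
The plan is to show that, for $i\neq j$, the $(i,j)$-entry of $\delta(x)$ is already determined by the single entry $x_{i,j}$. Two preliminary facts will be used throughout. First, by Lemma 2.2 together with the hypothesis that every Jordan derivation (hence every derivation) from $\mathcal{A}$ into $\mathcal{M}$ is inner, every derivation from $M_n(\mathcal{A})$ into $M_n(\mathcal{M})$ is inner; thus each derivation produced by the $2$-local property of $\delta$ has the form $D_a=[a,\cdot]$ for some $a\in M_n(\mathcal{M})$. Second, applying the $2$-local property to the pair $(t,t^2)$ shows that $\delta(t^2)=\delta(t)t+t\delta(t)$ for every $t\in M_n(\mathcal{A})$, so $\delta$ obeys the Jordan (square) identity even though additivity is not yet available.

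The main computation is to pair $x$ with the matrix unit $e_{j,i}$. This yields $a\in M_n(\mathcal{M})$ with $\delta(x)=[a,x]$ and $0=\delta(e_{j,i})=[a,e_{j,i}]$. Expanding $[a,e_{j,i}]=0$ entrywise forces the $i$-th row of $a$ to vanish off the $(i,i)$-entry, the $j$-th column of $a$ to vanish off the $(j,j)$-entry, and $a_{i,i}=a_{j,j}$. Substituting these into the $(i,j)$-entry of $[a,x]$ collapses every cross term, leaving
\[[\delta(x)]_{i,j}=a_{i,i}x_{i,j}-x_{i,j}a_{j,j}=[a_{i,i},x_{i,j}],\]
which depends on $x$ only through $x_{i,j}$ and the diagonal element $a_{i,i}$. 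Running the identical argument with $\widehat{x_{i,j}}$ in place of $x$ gives $[\delta(\widehat{x_{i,j}})]_{i,j}=[a'_{i,i},x_{i,j}]$ for the element $a'$ coming from the pair $(\widehat{x_{i,j}},e_{j,i})$; note that the constraints on $a$ also give $[a,\widehat{x_{i,j}}]_{i,j}=[a,x]_{i,j}$, since only the $(i,j)$-entry survives.

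The delicate point, which I expect to be the main obstacle, is removing the dependence on the diagonal element, i.e. matching $[a_{i,i},x_{i,j}]$ and $[a'_{i,i},x_{i,j}]$, since $a$ and $a'$ arise from different applications of the $2$-local property. I would actually aim at the stronger statement $[\delta(x)]_{i,j}=[\delta(\widehat{x_{i,j}})]_{i,j}=0$, from which the stated equality is immediate. The right-hand side is handled cleanly by the auxiliary element $w=e_{j,i}+\widehat{x_{i,j}}$: a direct computation gives $w^2=x_{i,j}e_{i,i}+x_{i,j}e_{j,j}\in D_n(\mathcal{A})$, so $\delta(w^2)=0$ and the Jordan identity yields $\delta(w)w+w\delta(w)=0$, whose $(i,i)$-entry reads $[\delta(w)]_{i,j}+x_{i,j}[\delta(w)]_{j,i}=0$; pairing $w$ with $e_{j,i}$ gives $[\delta(w)]_{j,i}=0$, hence $[\delta(w)]_{i,j}=0$, and pairing $w$ with $\widehat{x_{i,j}}$ together with $[c,e_{j,i}]_{i,j}=0$ transfers this to $[\delta(\widehat{x_{i,j}})]_{i,j}=0$. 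The harder half is the same vanishing for general $x$: pairing $x$ with $w$ gives an inner $[c,\cdot]$ for which the diagonal contribution cancels (because $c_{i,i}x_{i,j}=x_{i,j}c_{j,j}$), leaving
\[[\delta(x)]_{i,j}=\sum_{k\neq i}c_{i,k}x_{k,j}-\sum_{k\neq j}x_{i,k}c_{k,j},\]
and showing that this residual off-diagonal sum vanishes is where the real work lies. Here I would first invoke Lemma 2.5 to replace $x$ by its cross, and then exploit the index $l\notin\{i,j\}$ furnished by $n>2$ to peel off the remaining row and column entries by factoring single entries through the third coordinate $l$.
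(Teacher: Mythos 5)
Your preliminary computations are all correct: pairing $x$ with $e_{j,i}$ does force the stated shape on the implementing element $a$, the Jordan-square identity $\delta(t^2)=\delta(t)t+t\delta(t)$ does follow from $2$-locality, and your argument that $[\delta(\widehat{x_{i,j}})]_{i,j}=0$ (via $w=e_{j,i}+\widehat{x_{i,j}}$, $w^2\in D_n(\mathcal{A})$, and the two pairings $(w,e_{j,i})$ and $(w,\widehat{x_{i,j}})$) is complete and correct --- in fact it is a neat alternative to part of the paper's Lemma 2.7, which establishes the vanishing by a different route. But the lemma you were asked to prove concerns a general $x$, and that half of your argument is a genuine gap, which you yourself flag: after pairing $x$ with $w$ you arrive at $[\delta(x)]_{i,j}=\sum_{k\neq i}c_{i,k}x_{k,j}-\sum_{k\neq j}x_{i,k}c_{k,j}$, and the entire difficulty of the lemma is to show that the unknown off-diagonal entries $c_{i,k}$, $c_{k,j}$ contribute nothing. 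Your closing sentence (``peel off the remaining row and column entries by factoring single entries through the third coordinate $l$'') is a statement of intent, not an argument: each time you introduce a new comparison matrix, the $2$-local property hands you a \emph{new} implementing element, and nothing in your proposal controls its off-diagonal entries.

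The paper's proof supplies exactly the missing mechanism. It first uses Lemma 2.5 to replace $x$ by its cross $y=\sum_{k}(\widehat{x_{i,k}}+\widehat{x_{k,j}})-\widehat{x_{i,j}}$, and then strips off one entry at a time: for $s\neq i$ it compares $y$ with $z=y-\widehat{x_{s,j}}+e_{s,i}$. The point of inserting the unit $e_{s,i}$ is that for the element $a$ implementing $\delta$ at the pair $(y,z)$ one gets $[\delta(y)]_{i,i}-[\delta(z)]_{i,i}=[a,\widehat{x_{s,j}}-e_{s,i}]_{i,i}=-a_{i,s}$, so Lemma 2.4 (vanishing of the diagonal entries of $\delta$) forces $a_{i,s}=0$, which in turn kills the only surviving term $a_{i,s}x_{s,j}$ in $[a,y-z]_{i,j}$; Lemma 2.5 then removes the inserted $e_{s,i}$ again. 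Iterating over $s\neq i$ strips the $j$-th column down to the single row $u=\sum_{k}\widehat{x_{i,k}}$, and a symmetric argument with units $e_{j,s}$ strips the row down to $\widehat{x_{i,j}}$. If you want to complete your approach, this insertion-plus-Lemma-2.4 device (or some equivalent way of annihilating the off-diagonal entries of each new implementing element) is precisely what must be added; without it, the residual sum you isolate cannot be shown to vanish.
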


\begin{proof}
Let $$y=\sum_{k=1}^{n}(\widehat{x_{i,k}}+\widehat{x_{k,j}})-\widehat{x_{i,j}}.$$ By Lemma 2.5, $[\delta(x)]_{i,j}=[\delta(y)]_{i,j}$. Let $s\neq i$, and $z=y-\widehat{x_{s,j}}+e_{s,i}$, i.e.
\[ z=\begin{pmatrix}
0 & \dots & 0 & \dots & x_{1,j} & \dots & 0 \\
\vdots & ~ & \vdots & ~ & \vdots & ~ & \vdots\\
x_{i,1} & \dots & x_{i,i} & \dots & x_{i,j} & \dots & x_{i,n} \\
\vdots & ~ & \vdots & ~ & \vdots & ~ & \vdots\\
0 & \dots & e & \dots & 0 & \dots & 0\\
\vdots & ~ & \vdots & ~ & \vdots & ~ & \vdots\\
0 & \dots & 0 & \dots & x_{n,j} & \dots & 0 \\
\end{pmatrix}\]
Take an element $a$ in $M_n(\mathcal{M})$ such that $\delta(y)=[a,y],\delta(z)=[a,z]$. Then
$$[\delta(y)]_{i,i}-[\delta(z)]_{i,i}=[a,y-z]_{i,i}=[a,\widehat{x_{s,j}}-e_{s,i}]_{i,i}=-a_{i,s}.$$
By Lemma 2.4, $[\delta(y)]_{i,i}=[\delta(z)]_{i,i}=0$, and so $a_{i,s}=0$. Thus
$$[\delta(y)]_{i,j}-[\delta(z)]_{i,j}=[a,y-z]_{i,j}=[a,\widehat{x_{s,j}}-e_{s,i}]_{i,j}=a_{i,s}x_{s,j}=0,$$
i.e. $[\delta(y)]_{i,j}=[\delta(z)]_{i,j}$. Let
\[ w=z-e_{s,i}=\begin{pmatrix}
0 & \dots & 0 & \dots & x_{1,j} & \dots & 0 \\
\vdots & ~ & \vdots & ~ & \vdots & ~ & \vdots\\
x_{i,1} & \dots & x_{i,i} & \dots & x_{i,j} & \dots & x_{i,n} \\
\vdots & ~ & \vdots & ~ & \vdots & ~ & \vdots\\
0 & \dots & 0 & \dots & 0 & \dots & 0\\
\vdots & ~ & \vdots & ~ & \vdots & ~ & \vdots\\
0 & \dots & 0 & \dots & x_{n,j} & \dots & 0 \\
\end{pmatrix}\]
again by Lemma 2.5, we have $[\delta(z)]_{i,j}=[\delta(w)]_{i,j}$. Now we have $[\delta(y)]_{i,j}=[\delta(y-\widehat{x_{s,j}})]_{i,j}$. Repeating the above steps, we can obtain that $[\delta(y)]_{i,j}=[\delta(u)]_{i,j}$, where
\[ u=\sum_{k=1}^{n}\widehat{x_{i,k}}=\begin{pmatrix}
0 & \dots & 0 & \dots & 0 \\
\vdots & ~ & \vdots & ~ & \vdots \\
x_{i,1} & \dots & x_{i,j} & \dots & x_{i,n} \\
\vdots & ~ & \vdots & ~ & \vdots \\
0 & \dots & 0 & \dots & 0 \\
\end{pmatrix}\]
Similarly, we can show that
$$[\delta(u)]_{i,j}=[\delta(u-\widehat{x_{i,s}}+e_{j,s})]_{i,j}=[\delta(u-\widehat{x_{i,s}})]_{i,j},for s\neq j.$$
It follows that $[\delta(u)]_{i,j}=[\delta(\widehat{x_{i,j}})]_{i,j}$ by taking other indexes successively. Hence
$$[\delta(x)]_{i,j}=[\delta(y)]_{i,j}=[\delta(u)]_{i,j}=[\delta(\widehat{x_{i,j}})]_{i,j}.$$
The proof is complete.
\end{proof}

\begin{lemma}
For all $x\in M_n(\mathcal{A})$, $[\delta(x)]_{i,j}=0$, whenever $i\neq j$.
\end{lemma}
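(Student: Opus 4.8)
The plan is to reduce to a single-entry matrix and then feed the $2$-local derivation one cleverly chosen companion element. By Lemma 2.6 we have $[\delta(x)]_{i,j}=[\delta(\widehat{x_{i,j}})]_{i,j}$ for $i\neq j$, so it suffices to prove $[\delta(\widehat{x_{i,j}})]_{i,j}=0$; write $b=\widehat{x_{i,j}}$ for brevity. First I would record that $\delta$ annihilates \emph{every} matrix whose off-diagonal entries are scalar: Lemma 2.4 kills the diagonal entries of $\delta$ of such a matrix, while Lemma 2.6 identifies each off-diagonal entry of $\delta$ with $\delta$ of a scalar matrix unit, which lies in $span\{e_{i,j}\}_{i,j=1}^{n}$ and is therefore annihilated. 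In particular $\delta(v)=0$ for the mixed element $v=e_{j,i}+x_{i,j}e_{i,i}$, which glues the scalar matrix unit $e_{j,i}$ to the diagonal algebra term $x_{i,j}e_{i,i}$. Applying Lemma 2.2 to the pair $\{b,v\}$ then produces $a\in M_n(\mathcal{M})$ with $\delta(b)=[a,b]$ and $[a,v]=\delta(v)=0$.

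From $[a,v]=0$ I would extract three facts by reading off individual entries. The $(j,l)$ entries with $l\neq i$ force the off-diagonal part of row $i$ of $a$ to vanish, in particular $a_{i,j}=0$; the $(i,i)$ entry then collapses to the commutation relation $[a_{i,i},x_{i,j}]=0$; and the $(j,i)$ entry gives the diagonal-difference relation $a_{i,i}-a_{j,j}=a_{j,i}x_{i,j}$. The last ingredient comes from Lemma 2.4 applied to $b$ itself: since $[\delta(b)]_{j,j}=0$ and $\delta(b)=[a,b]$, a direct evaluation of $[a,b]_{j,j}$ gives $a_{j,i}x_{i,j}=0$. Assembling these,
\[
[\delta(\widehat{x_{i,j}})]_{i,j}=[a,b]_{i,j}=a_{i,i}x_{i,j}-x_{i,j}a_{j,j}=x_{i,j}(a_{i,i}-a_{j,j})=x_{i,j}\,a_{j,i}x_{i,j}=0,
\]
where the third equality uses $[a_{i,i},x_{i,j}]=0$, the fourth uses the diagonal-difference relation, and the last uses $a_{j,i}x_{i,j}=0$.

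The only genuinely delicate point, and what I expect to be the main obstacle, is the choice of $v$. A purely diagonal companion (say $x_{i,j}e_{i,i}$) yields $[a_{i,i},x_{i,j}]=0$ but never relates $a_{i,i}$ to $a_{j,j}$, whereas a purely scalar companion (say $e_{j,i}$) forces $a_{i,i}=a_{j,j}$ but carries no information about $x_{i,j}$; neither suffices alone, and since the implementing element $a$ depends on the chosen pair, one cannot legitimately superimpose the outputs of two separate pairings — attempting to do so only reproduces the identity $c=c$. The element $v=e_{j,i}+x_{i,j}e_{i,i}$ is engineered so that a single derivation $a$ carries both kinds of information at once, and the leftover term $a_{j,i}x_{i,j}$ that would otherwise obstruct $a_{i,i}=a_{j,j}$ is exactly the quantity already annihilated by Lemma 2.4. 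The care required is in checking that these entrywise relations interlock as claimed rather than dissolving into a tautology.
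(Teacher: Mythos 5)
Your proof is correct, but it takes a genuinely different route from the paper's. The paper also works by pairing a test element with a companion that $\delta$ annihilates, but its companion is the global shift $v=\sum_{k=1}^{n-1}e_{k+1,k}$, which lies in $span\{e_{i,j}\}_{i,j=1}^{n}$ and so is killed by $\delta$ for free; this is paired with the element $y$ having $y_{i,i}=e+x_{i,j}$, $y_{i,j}=x_{i,j}$. Commutation with the shift forces $a_{i,i}=a_{j,j}$ and $a_{i,j}=0$, while the invertibility of $e+x_{i,j}$ (available only after the normalization $\|x_{i,j}\|<1$ via homogeneity) forces $a_{j,i}=0$ outright; Lemma 2.4 then finishes, and because the shift's commutant condition is one-sided the paper must treat $i<j$ and $i>j$ as separate cases. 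Your companion $v=e_{j,i}+x_{i,j}e_{i,i}$ is not in the span, so you must first argue $\delta(v)=0$, which you do correctly by combining Lemma 2.4 (diagonal entries) with Lemma 2.6 (off-diagonal entries reduce to $\delta$ of scalar matrix units); this observation, that $\delta$ kills every matrix with scalar off-diagonal entries, is a nice reusable fact not isolated in the paper. In exchange you extract only the weaker relations $a_{i,i}-a_{j,j}=a_{j,i}x_{i,j}$ and $a_{j,i}x_{i,j}=0$ instead of $a_{i,i}=a_{j,j}$ and $a_{j,i}=0$, but as your final computation shows, these interlock and suffice. What your approach buys: no appeal to invertibility or to the normalization $\|x_{i,j}\|<1$ in this step, and a single argument valid uniformly for all $i\neq j$ with no case split. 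What the paper's approach buys: the companion is annihilated by $\delta$ by definition rather than by a preliminary argument, and the entry relations extracted are cleaner (exact equalities rather than relations modulo $a_{j,i}x_{i,j}$).
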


\begin{proof}
Assume that $i<j$. Let
\[ v=\sum_{k=1}^{n-1}e_{k+1,k}=\begin{pmatrix}
0 & ~ & ~ & ~ & ~ \\
e & 0 & ~ & ~ & ~ \\
0 & e & 0 & ~ & ~\\
\vdots & ~ & \ddots & \ddots & ~  \\
0 & ~ & \dots & e & 0 \\
\end{pmatrix}\]
It is not difficult to check that every matrix $a$ in $M_n(\mathcal{M})$ commuting with $v$ satisfies the following properties:
$$a_{k,k}=a_{s,s},~~a_{k,s}=0,whenever~~k<s.$$
Let $y$ be a matrix in $M_n(\mathcal{A})$ such that $y_{i,i}=e+x_{i,j},y_{i,j}=x_{i,j}$ and the other entries of $y$ are all zero. Take an element $a$ from $M_n(\mathcal{M})$ such that $\delta(y)=[a,y],\delta(v)=[a,v]$. Since $v\in span\{e_{i,j}\}_{i,j=1}^{n}$, $\delta(v)=[a,v]=0$. Thus $a_{i,i}=a_{j,j}$, and $a_{i,j}=0$. Since $y_{j,i}=0$, by Lemma 2.6, $[\delta(y)]_{j,i}=0$. Then $$0=[\delta(y)]_{j,i}=[a,y]_{j,i}=a_{j,i}y_{i,i}=a_{j,i}(e+x_{i,j}).$$
Since $e+x_{i,j}$ is invertible, it implies that $a_{j,i}=0$. Thus
$$[\delta(y)]_{i,i}=[a,y]_{i,i}=a_{i,i}y_{i,i}-y_{i,i}a_{i,i}-y_{i,j}a_{j,i}=a_{i,i}x_{i,j}-x_{i,j}a_{i,i}.$$
Therefore
$$[\delta(y)]_{i,j}=[a,y]_{i,j}=a_{i,i}y_{i,j}-y_{i,i}a_{i,j}-y_{i,j}a_{j,j}=a_{i,i}x_{i,j}-x_{i,j}a_{i,i}=[\delta(y)]_{i,i}=0.$$
Again by Lemma 2.6,
$$[\delta(x)]_{i,j}=[\delta(y)]_{i,j}=0.$$
Similarly we can show that $[\delta(x)]_{i,j}=0$ if $i>j$.
The proof is complete.
\end{proof}
Now we are in position to prove Theorem 2.1.
\begin{proof}[\bf Proof of Theorem 2.1]
According to Lemma 2.3, there exists a derivation $D_a$ such that $\Delta|_{D_n(\mathcal{A})}=D_a|_{D_n(\mathcal{A})}$ and $\Delta|_{span\{e_{i,j}\}_{i,j=1}^{n}}=D_a|_{span\{e_{i,j}\}_{i,j=1}^{n}}$. Let $\delta=\Delta-D_a$. Then by Lemmas 2.4 and 2.7, we have $\delta=0$, i.e. $\Delta=D_a$. It follows that $\Delta$ is a derivation.
\end{proof}

The following corollaries are some specific examples for applying Theoerm 2.1.

\begin{corollary}
Let $\mathcal{A}$ be a unital commutative C*-algebra and $\mathcal{M}$ be a unital $\mathcal{A}$-bimodule.
Then each 2-local derivation from $M_n(\mathcal{A})(n>2)$ into $M_n(\mathcal{M^{*}})$ is a derivation.
\end{corollary}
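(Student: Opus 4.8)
The plan is to read this off from Theorem 2.1 by taking the dual bimodule $\mathcal{M}^*$ as the coefficient module. Concretely, it suffices to verify the two hypotheses of Theorem 2.1 with the module there replaced by $\mathcal{M}^*$: first, that $\mathcal{M}^*$ is a unital $\mathcal{A}$-bimodule, and second, that every Jordan derivation from $\mathcal{A}$ into $\mathcal{M}^*$ is an inner derivation. Once both are in hand, Theorem 2.1 applied to $M_n(\mathcal{A})$ with coefficients in $M_n(\mathcal{M}^*)$ gives the conclusion at once.

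For the first hypothesis I would equip $\mathcal{M}^*$ with the standard dual-module structure, setting $(a\cdot f)(m)=f(m\cdot a)$ and $(f\cdot a)(m)=f(a\cdot m)$ for $a\in\mathcal{A}$, $f\in\mathcal{M}^*$ and $m\in\mathcal{M}$. A routine verification shows that these are genuine left and right actions, that they satisfy the bimodule compatibility $(a\cdot f)\cdot b=a\cdot(f\cdot b)$, and that since $\mathcal{M}$ is unital the unit $e$ of $\mathcal{A}$ acts as the identity on $\mathcal{M}^*$; hence $\mathcal{M}^*$ is a unital (dual) Banach $\mathcal{A}$-bimodule. This step is purely formal.

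The substance lies in the second hypothesis, and I would establish it in two stages. First, because $\mathcal{A}$ is a C*-algebra, every (bounded) Jordan derivation from $\mathcal{A}$ into a Banach $\mathcal{A}$-bimodule is in fact an associative derivation; this is the Jordan derivation theorem of Johnson in the context of C*-algebras. Second, a commutative C*-algebra is nuclear, hence amenable, so every (bounded) derivation from $\mathcal{A}$ into the dual module $\mathcal{M}^*$ is inner; this is precisely the defining vanishing property of amenability, $H^1(\mathcal{A},\mathcal{M}^*)=0$, applied to the dual of the Banach bimodule $\mathcal{M}$ set up above. Composing the two stages shows that every bounded Jordan derivation from $\mathcal{A}$ into $\mathcal{M}^*$ is inner, which is exactly what Theorem 2.1 requires.

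The point that must be handled carefully, and which I expect to be the main obstacle, is the qualifier \emph{bounded}: the hypothesis of Theorem 2.1 ranges over all Jordan derivations, which are a priori merely linear. I would therefore first invoke automatic continuity for Jordan derivations on C*-algebras, so that any Jordan derivation from $\mathcal{A}$ into a Banach $\mathcal{A}$-bimodule is automatically continuous and may legitimately be fed into the two stages above. I would also double-check that the amenability stage genuinely applies, i.e. that $\mathcal{M}^*$ is realized as the dual of an honest Banach $\mathcal{A}$-bimodule so that the cohomological vanishing is available; this is exactly what the dual-module construction in the first step guarantees. Modulo these continuity and dual-structure bookkeeping issues, the proof reduces to a clean application of Theorem 2.1.
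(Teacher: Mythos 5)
Your proposal is correct and follows essentially the same route as the paper: the paper likewise notes that $\mathcal{M}^{*}$ inherits a unital dual $\mathcal{A}$-bimodule structure, invokes amenability of the commutative C*-algebra $\mathcal{A}$ to conclude that derivations from $\mathcal{A}$ into $\mathcal{M}^{*}$ are inner, cites the classical result that Jordan derivations from a C*-algebra into a bimodule are derivations, and then applies Theorem 2.1. Your extra attention to the dual-module bookkeeping and to automatic continuity of Jordan derivations is a refinement of points the paper leaves implicit.
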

\begin{proof}
Since $\mathcal{M}$ is a unital $\mathcal{A}$-bimodule, so is $\mathcal{M^{*}}$. We know $\mathcal{A}$ is amenable, that is each derivation from $\mathcal{A}$ into $\mathcal{M^{*}}$ is an inner derivation \cite{V. Runde}. Besides, there is a classic result that each Jordan derivation from a C*-algebra into its bimodule is a derivation (see \cite{N. Jacobson}). By Theorem 2.1, the proof is complete.
\end{proof}

\begin{corollary}
Let $\mathcal{A}$ be a C*-algebra. Then each 2-local derivation from $M_n(\mathcal{A})(n>2)$ into $M_n(\mathcal{A^{*}})$ is a derivation.
\end{corollary}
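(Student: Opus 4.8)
The plan is to deduce this from Theorem 2.1 applied with the bimodule $\mathcal{M}=\mathcal{A}^*$, exactly as in Corollary 2.9 but replacing the use of amenability by weak amenability. First I would record that $\mathcal{A}^*$ is a unital $\mathcal{A}$-bimodule under the standard dual actions $(a\cdot f)(b)=f(ba)$ and $(f\cdot a)(b)=f(ab)$ for $a,b\in\mathcal{A}$ and $f\in\mathcal{A}^*$; if $\mathcal{A}$ lacks a unit I would first pass to its unitization, since the only role of the unital hypothesis in Theorem 2.1 is to provide the matrix units and invertible diagonal perturbations used in Lemmas 2.4--2.7. Thus the whole problem reduces to verifying the single hypothesis of Theorem 2.1: that every Jordan derivation from $\mathcal{A}$ into $\mathcal{A}^*$ is an inner derivation.

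For that verification I would chain together two classical facts. The first, already invoked in the proof of Corollary 2.9, is that every Jordan derivation from a C*-algebra into any Banach bimodule is in fact a derivation; applied here it turns a Jordan derivation $\mathcal{A}\to\mathcal{A}^*$ into an honest derivation. The second is that every C*-algebra is weakly amenable, i.e. $H^1(\mathcal{A},\mathcal{A}^*)=\{0\}$, which is Haagerup's theorem; this says precisely that every bounded derivation from $\mathcal{A}$ into $\mathcal{A}^*$ is inner. Since any derivation of a C*-algebra into a Banach bimodule is automatically continuous, the derivation obtained from the first step is bounded and hence inner by the second. Combining the two gives the required statement, after which Theorem 2.1 finishes the proof.

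The point to watch, and the reason the conclusion is phrased only for the dual module $\mathcal{A}^*$ rather than for an arbitrary $\mathcal{M}^*$ as in Corollary 2.9, is exactly where amenability has been weakened to weak amenability: a general (noncommutative) C*-algebra need not be amenable, so one cannot expect derivations into an arbitrary dual bimodule $\mathcal{M}^*$ to be inner, and Haagerup's theorem only controls the coefficient module $\mathcal{A}^*$. I expect the main obstacle to be bookkeeping rather than conceptual: confirming that the Jordan-to-derivation passage together with automatic continuity genuinely hands us a bounded derivation into $\mathcal{A}^*$ to which weak amenability applies, and, in the non-unital case, checking that unitizing $\mathcal{A}$ does not disturb the hypotheses of Theorem 2.1.
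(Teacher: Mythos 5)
Your proposal is correct and follows essentially the same route as the paper: the paper likewise deduces the corollary from Theorem 2.1 by invoking weak amenability of C*-algebras (every derivation from $\mathcal{A}$ into $\mathcal{A}^*$ is inner), with the Jordan-derivation-to-derivation reduction implicitly carried over from the proof of the preceding corollary (which you cite with an off-by-one label). Your extra care about automatic continuity and about unitizing a non-unital $\mathcal{A}$ goes beyond the paper's two-line proof, which tacitly treats $\mathcal{A}$ as unital, as Theorem 2.1 requires.
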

\begin{proof}
Every C*-algebra $\mathcal{A}$ is weakly amenable, that is, each derivation from $\mathcal{A}$ into $\mathcal{A^{*}}$ is an inner derivation \cite{V. Runde}. By Theorem 2.1, the proof is complete.
\end{proof}

\begin{corollary}
Each 2-local derivation on the matrix algebra $M_n(\mathcal{A})(n>2)$  is a derivation, if the algebra $\mathcal{A}$ satisfies one of the following conditions:\\
$(1)$ $\mathcal{A}=alg\mathcal{L}$, where $\mathcal{L}$ is a subspace lattice on a Hilbert space $\mathcal{H}$ with the property that $0_{+} \neq 0$ or $\mathcal{H}_{-} \neq \mathcal{H}$;\\
$(2)$ $\mathcal{A}=alg(N_1\bigotimes N_2\bigotimes \cdots \bigotimes N_n)$, where each $N_i$ is a nest, $i=1,2,\cdot\cdot\cdot,n$.
\end{corollary}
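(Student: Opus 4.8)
The plan is to reduce both cases to Theorem~2.1 applied with $\mathcal{M}=\mathcal{A}$, so that the whole task becomes checking that every Jordan derivation of $\mathcal{A}$ is a derivation implemented by a single element (here one uses the spatial-innerness version noted in the introduction, which permits the implementing operator to lie in $B(\mathcal{H})\supseteq\mathcal{A}$ rather than in $\mathcal{A}$ itself). Granting this, Theorem~2.1 immediately yields that every $2$-local derivation on $M_n(\mathcal{A})$ with $n>2$ is a derivation, and the corollary collapses to two verifications, one per listed condition.

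For condition $(1)$, I would observe that the hypothesis $0_{+}\neq 0$ or $\mathcal{H}_{-}\neq\mathcal{H}$ is exactly the structural condition on the subspace lattice $\mathcal{L}$ under which the reflexive algebra $alg\,\mathcal{L}$ admits no exotic Jordan derivations: every Jordan derivation of $alg\,\mathcal{L}$ into itself is a derivation, and moreover every derivation of $alg\,\mathcal{L}$ is spatial, i.e.\ of the form $x\mapsto[a,x]$ for some $a\in B(\mathcal{H})$. With these two facts recorded, the hypotheses of Theorem~2.1 (in its $\mathcal{M}=\mathcal{A}$ form) hold and the conclusion follows.

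For condition $(2)$, the algebra $alg(N_1\otimes\cdots\otimes N_n)$ is a completely distributive commutative subspace lattice algebra, and I would appeal to the corresponding structure theory for this class: Jordan derivations of such tensor products of nest algebras are again derivations, and derivations are spatially inner. Since a single continuous nest need not satisfy $0_{+}\neq 0$, this case cannot be subsumed under $(1)$ and must be handled through the tensor/CDCSL structure directly; once the Jordan-derivation and spatiality statements are available, Theorem~2.1 applies verbatim.

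The main obstacle is not the matrix-algebra machinery, which is entirely absorbed into Theorem~2.1, but the two algebraic inputs about $\mathcal{A}$ itself, and among these the decisive one is showing that every Jordan derivation is in fact a derivation, ruling out genuine Jordan-but-not-derivation maps. For $(1)$ this is precisely where the lattice condition does its work; for $(2)$ the presence of continuous nest factors forces one to establish spatial innerness without an atom to anchor the argument, which is the delicate point. Both inputs are available in the existing literature on reflexive and CDCSL algebras, so the corollary is a direct application once they are quoted.
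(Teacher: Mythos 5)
Your proposal is correct and follows essentially the same route as the paper: both reduce the corollary to the matrix-algebra theorem by quoting from the literature that Jordan derivations of $\mathcal{A}$ are derivations and that derivations of $\mathcal{A}$ are inner. The only cosmetic difference is that you invoke the spatial-innerness variant (Theorem 2.15), whereas the paper applies Theorem 2.1 directly, since the results it cites (F. Lu for Jordan derivations; Li--Pendharkar for case (1); the Gilfeather--Hopenwasser--Larson vanishing-cohomology theorem for lattices generated by finite independent nests in case (2)) give genuine innerness in $\mathcal{A}$ itself, which is stronger than the spatiality you require.
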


\begin{proof}
(1) According to \cite[Theorem 2.1]{F. Lu1}, each Jordan derivation on $\mathcal{A}$ is a derivation. And see in \cite{J. Li} , each derivation on $\mathcal{A}$ is an inner derivation.
(2) It is known that $N_1\bigotimes N_2\bigotimes \cdots \bigotimes N_n$ is still a CSL, so Jordan derivations on $\mathcal{A}$ are derivations \cite{F. Lu2}. In \cite{F. Gilfeather}, F. Gilfeather, A. Hopenwasser and D. Larson prove that the cohomology vanishes for CSL algebras whose lattices are generated by finite independent nests. Applying this result, we immediately obtain that derivations on $\mathcal{A}$ are inner derivations. By Theorem 2.1, the proof is complete.
\end{proof}

\begin{corollary}
Let $\mathcal{A}$ be a nest algebra of infinite multiplicity. Then each 2-local derivation on $\mathcal{A}$ is a derivation.
\end{corollary}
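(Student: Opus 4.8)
The plan is to realize the infinite-multiplicity nest algebra as a full matrix algebra over itself and then invoke Theorem 2.1. By definition, a nest algebra $\mathcal{A}=\operatorname{alg}\mathcal{N}$ has infinite multiplicity when, after a unitary identification $H=H_0\otimes\ell^2$, the nest takes the form $\mathcal{N}=\{N\otimes I:N\in\mathcal{N}_0\}$ for a nest $\mathcal{N}_0$ on $H_0$; consequently $\mathcal{A}=\operatorname{alg}\mathcal{N}_0\,\overline{\otimes}\,B(\ell^2)$. First I would fix an integer $n>2$ and use the Hilbert space isomorphism $\ell^2\cong\ell^2\oplus\cdots\oplus\ell^2$ ($n$ summands), which gives $B(\ell^2)\cong M_n(B(\ell^2))$ and hence an algebra isomorphism
\[ \Phi:\mathcal{A}\longrightarrow M_n(\mathcal{A}), \]
since $\operatorname{alg}\mathcal{N}_0\,\overline{\otimes}\,M_n(B(\ell^2))\cong M_n\bigl(\operatorname{alg}\mathcal{N}_0\,\overline{\otimes}\,B(\ell^2)\bigr)=M_n(\mathcal{A})$. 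The base algebra in this matrix picture is again $\mathcal{A}$ itself, which is the key point that allows a self-referential application of Theorem 2.1.

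Next I would transport the given 2-local derivation $\Delta$ on $\mathcal{A}$ across $\Phi$ by setting $\widetilde{\Delta}=\Phi\Delta\Phi^{-1}$. For $X,Y\in M_n(\mathcal{A})$, choosing a derivation $D_{a,b}$ of $\mathcal{A}$ that agrees with $\Delta$ at $a=\Phi^{-1}(X)$ and $b=\Phi^{-1}(Y)$ shows that $\Phi D_{a,b}\Phi^{-1}$ is a derivation of $M_n(\mathcal{A})$ agreeing with $\widetilde{\Delta}$ at $X$ and $Y$; hence $\widetilde{\Delta}$ is a 2-local derivation on $M_n(\mathcal{A})$. To apply Theorem 2.1 with $\mathcal{M}=\mathcal{A}$, I need every Jordan derivation on $\mathcal{A}$ to be inner. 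This follows from two known facts about nest algebras: every Jordan derivation on a nest algebra is a derivation, and every derivation of a nest algebra is inner (a classical result of Christensen). Theorem 2.1 then yields that $\widetilde{\Delta}$ is a derivation, and pulling back along $\Phi$ shows that $\Delta=\Phi^{-1}\widetilde{\Delta}\Phi$ is a derivation.

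The hard part will be the first step: correctly extracting from the definition of infinite multiplicity the tensor decomposition $\mathcal{A}=\operatorname{alg}\mathcal{N}_0\,\overline{\otimes}\,B(\ell^2)$ and verifying that the induced map $\Phi$ is genuinely an algebra isomorphism of $\mathcal{A}$ onto $M_n(\mathcal{A})$ whose base algebra is $\mathcal{A}$. Once this structural identification is secured, the transport of the 2-local condition through $\Phi$ and the citation of the Jordan/inner-derivation properties of nest algebras are routine, and Theorem 2.1 completes the argument.
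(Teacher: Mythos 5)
Your proposal is correct and follows essentially the same route as the paper: the paper likewise uses infinite multiplicity to identify $\mathcal{A}$ with $M_n(\mathcal{B})$ for a nest algebra $\mathcal{B}$ and some $n>2$, then applies Theorem 2.1 together with Christensen's theorem that derivations of nest algebras are inner (and, implicitly, that Jordan derivations of nest algebras are derivations, which you state explicitly). The only cosmetic difference is that you take the base algebra to be $\mathcal{A}$ itself via the tensor decomposition and spell out the transport of the 2-local condition across the isomorphism, both of which the paper leaves implicit.
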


\begin{proof}
Since $\mathcal{A}$ is of infinite multiplicity, it is isomorphic to $M_n(\mathcal{B})$ for some nest algebra $\mathcal{B}$ and some integer $n>2$. It is known that
each derivation on nest algebras is an inner derivation \cite{E. Christensen}. By Theorem 2.1, the proof is complete.
\end{proof}

\begin{lemma}
Let $\mathcal{A}=\bigoplus_{i=1}^{\infty}\mathcal{A}_i$ be a Banach algebra with the inner derivation property, i.e. all derivations on $\mathcal{A}$ are inner derivations. If each 2-local derivation on $\mathcal{A}_i$
is a derivation for any $i\in \mathbb{N}$, then each 2-local derivation on $\mathcal{A}$ is also a derivation.
\end{lemma}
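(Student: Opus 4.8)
The plan is to exploit the family of central idempotents $\{p_i\}$ attached to the decomposition $\mathcal{A}=\bigoplus_{i=1}^{\infty}\mathcal{A}_i$, where $p_i$ is the unit of $\mathcal{A}_i$ viewed as an element of $\mathcal{A}$, so that $p_ip_j=0$ for $i\neq j$, $p_i^2=p_i$, each $p_i$ is central, $p_i\mathcal{A}=\mathcal{A}_i$, and every $z\in\mathcal{A}$ is recovered from its components via $z=\sum_i p_iz$. The first and most important observation is that every derivation annihilates these idempotents: since $\mathcal{A}$ has the inner derivation property, an arbitrary derivation $D$ on $\mathcal{A}$ has the form $D(\cdot)=[a,\cdot]$ for some $a\in\mathcal{A}$, and centrality of $p_i$ gives $D(p_i)=[a,p_i]=0$. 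Consequently, for every $x\in\mathcal{A}$ one has $D(p_ix)=D(p_i)x+p_iD(x)=p_iD(x)$ and likewise $D(xp_i)=D(x)p_i$; in particular $D$ leaves each summand $\mathcal{A}_i$ invariant.

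Next I would transfer these identities to the $2$-local derivation $\Delta$. Fix $i$. For $z\in\mathcal{A}_i$, choose a derivation $D$ with $\Delta(z)=D(z)$; then for $j\neq i$ we get $p_j\Delta(z)=p_jD(z)=D(p_jz)=0$, so $\Delta(z)\in\mathcal{A}_i$, proving $\Delta(\mathcal{A}_i)\subseteq\mathcal{A}_i$. More generally, for arbitrary $x\in\mathcal{A}$, applying the $2$-local property to the pair $(x,p_ix)$ produces a derivation $D$ with $\Delta(x)=D(x)$ and $\Delta(p_ix)=D(p_ix)$, whence $p_i\Delta(x)=p_iD(x)=D(p_ix)=\Delta(p_ix)$. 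Thus $\Delta$ acts coordinatewise: the $i$-th component of $\Delta(x)$ depends only on $p_ix$ and equals $\Delta(p_ix)$.

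I would then restrict to each summand. Define $\Delta_i=\Delta|_{\mathcal{A}_i}:\mathcal{A}_i\to\mathcal{A}_i$, which is well defined by the previous paragraph. Given $x,y\in\mathcal{A}_i$, a derivation $D$ witnessing the $2$-local property for the pair $(x,y)$ restricts to a derivation of $\mathcal{A}_i$, since $D(\mathcal{A}_i)\subseteq\mathcal{A}_i$ and $\mathcal{A}_i$ is a subalgebra, and it agrees with $\Delta_i$ at $x$ and $y$. Hence $\Delta_i$ is a $2$-local derivation on $\mathcal{A}_i$, and by hypothesis it is a derivation.

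Finally I would reassemble. Writing $x_i=p_ix$, the coordinatewise identity gives $\Delta(x)=\sum_i p_i\Delta(x)=\sum_i\Delta(x_i)=\sum_i\Delta_i(x_i)$. Since each $\Delta_i$ is linear and $x\mapsto x_i$ is linear, $\Delta$ is linear; and since $(xy)_i=x_iy_i$ while distinct summands multiply to zero, the $i$-th component of $\Delta(xy)$ is $\Delta_i(x_iy_i)=\Delta_i(x_i)y_i+x_i\Delta_i(y_i)$, which is exactly the $i$-th component of $\Delta(x)y+x\Delta(y)$. Hence $\Delta$ is a derivation. The one step requiring care is the coordinatewise reduction $p_i\Delta(x)=\Delta(p_ix)$: this is the only place where the $2$-local hypothesis and the annihilation $D(p_i)=0$ are used together, and everything afterward is a routine componentwise verification.
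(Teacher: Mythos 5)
Your proof is correct in substance and shares the paper's overall three-stage skeleton --- show the 2-local map respects the direct-sum decomposition, restrict to each summand where the hypothesis applies, then reassemble additivity and the Leibniz rule componentwise --- but your mechanism for the first stage is genuinely different from the paper's, and it carries one hidden assumption. The paper never uses idempotents: it invokes the inner derivation property to write $\delta(a)=[a,x]$, notes that commutators split componentwise because $\mathcal{A}_i\mathcal{A}_j=0$ for $i\neq j$, and compares the two inner representations coming from the pairs $(a,a_1)$ and $(a,a_2)$ to conclude $\delta(a)=\delta(a_1)+\delta(a_2)$ (it also treats only two summands explicitly, leaving the general case implicit). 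Your route via $D(p_i)=0$ and the pair $(x,p_ix)$, yielding $p_i\Delta(x)=\Delta(p_ix)$, is cleaner and handles infinitely many summands uniformly; it even shows that innerness is nearly superfluous here, since for a central idempotent $D(p_i)=D(p_i^2)=2p_iD(p_i)$ already forces $D(p_i)=0$ for an arbitrary derivation. What it costs is the existence of the units $p_i$: the lemma does not assume the summands $\mathcal{A}_i$ are unital, and if some $\mathcal{A}_i$ has no unit your argument cannot start, whereas the paper's commutator decomposition works for non-unital summands. In the paper's actual application (Corollary 2.13, von Neumann algebras) unitality is automatic, so nothing is lost there, but to prove the lemma as stated you should either add that hypothesis explicitly or replace the idempotent step by the commutator argument. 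One final notational point: $z=\sum_i p_iz$ need not converge in norm (for $\ell^\infty$-type sums, e.g.\ direct sums of von Neumann algebras, it converges only strongly), so that identity should be read simply as ``elements of $\mathcal{A}$ are determined by their components $p_iz$'' --- which is all your reassembly step actually uses.
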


\begin{proof}
We only consider the case $\mathcal A=\mathcal{A}_1\bigoplus \mathcal{A}_2$. For a 2-local derivation $\delta$ on $\mathcal{A}$, denote
by $\delta_i$ the restriction of $\delta$ in $\mathcal{A}_i$, $i=1,2$. One can easily verify that $\delta(a_i)=\delta_i(a_i)\in\mathcal{A}_i$ for each
$a_i\in\mathcal{A}_i$, moreover, $\delta_i$ is a 2-local derivation on $\mathcal{A}_i$,
and thus a derivation.

For each $a\in \mathcal A$, we write $a=a_1+a_2$, where $a_i\in\mathcal{A}_i$, $i=1,2$. By the definition of 2-local derivation, there exists an element
$x\in\mathcal A$ such that $\delta(a)=[a,x]=[a_1,x_1]+[a_2,x_2]$ and $\delta(a_1)=[a_1,x]=[a_1,x_1]$. Similarly, there exists an element
$y\in\mathcal A$ such that $\delta(a)=[a,y]=[a_1,y_1]+[a_2,y_2]$ and $\delta(a_2)=[a_2,y]=[a_2,y_2]$.
So we can obtain $$\delta(a)=\delta(a_1)+[a_2,x_2]=[a_1,y_1]+\delta(a_2).$$
It follows that $\delta(a_1)=[a_1,y_1]$, $\delta(a_2)=[a_2,x_2]$ and thus $\delta(a)=\delta(a_1)+\delta(a_2)$. Hence $\delta$ is linear.
Moreover, we can obtain
\begin{align*}
\delta(ab)
&=\delta(a_1b_1+a_2b_2)\\
&=\delta(a_1b_1)+\delta(a_2b_2)\\
&=\delta(a_1)b_1+a_1\delta(b_1)+\delta(a_2)b_2+a_2\delta(b_2)\\
&=(\delta(a_1)+\delta(a_2))(b_1+b_2)+(a_1+a_2)(\delta(b_1)+\delta(b_2))\\
&=\delta(a)b+a\delta(b)
\end{align*}
Hence $\delta$ is a derivation on $\mathcal A$. The proof is complete.
\end{proof}

\begin{corollary}
Let $\mathcal{A}$ be a von Neumann algebra without direct summand of type $I_1$ and $I_2$. Then each 2-local derivation on $\mathcal{A}$ is a derivation.
\end{corollary}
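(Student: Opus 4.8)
The plan is to reduce the statement to Theorem 2.1 by exhibiting $\mathcal{A}$ as a direct sum of algebras each isomorphic to a matrix algebra $M_n(\mathcal{B})$ with $n>2$ over a suitable von Neumann algebra $\mathcal{B}$, and then to invoke Lemma 2.12. First I would record the two global facts that make the machinery applicable. On one hand, every von Neumann algebra has the inner derivation property (Kadison--Sakai), so $\mathcal{A}$ itself satisfies the standing hypothesis of Lemma 2.12. On the other hand, every Jordan derivation on a C*-algebra into a bimodule is a derivation, hence inner when the module is the algebra itself. These two facts will supply the hypotheses of Theorem 2.1 for each summand produced below.

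Next I would invoke the central type decomposition $\mathcal{A}=\mathcal{A}_{I}\oplus\mathcal{A}_{II_1}\oplus\mathcal{A}_{II_\infty}\oplus\mathcal{A}_{III}$. For the type $I$ part the assumption that there is no summand of type $I_1$ or $I_2$ yields $\mathcal{A}_I=\bigoplus_{n\geq 3}\mathcal{A}_{I_n}$, where each homogeneous summand is isomorphic to $M_n(\mathcal{Z}_n)$ with $\mathcal{Z}_n$ its abelian center and $n\geq 3$. For each such summand I apply Theorem 2.1 with $\mathcal{A}=\mathcal{M}=\mathcal{Z}_n$: on an abelian von Neumann algebra every Jordan derivation is a derivation, hence zero, hence inner, so the hypothesis of Theorem 2.1 holds and every 2-local derivation on $M_n(\mathcal{Z}_n)$ ($n>2$) is a derivation.

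For the remaining summands I would use comparison theory to present each as a $3\times 3$ matrix algebra. If $\mathcal{B}$ is of type $II_\infty$ or $III$, then $\mathcal{B}$ is properly infinite, so $1$ splits into three mutually orthogonal projections each equivalent to $1$, giving $\mathcal{B}\cong M_3(p\mathcal{B}p)$ with $p\mathcal{B}p$ again a von Neumann algebra. If $\mathcal{B}$ is of type $II_1$ I would instead use the center-valued trace $T$ to produce a projection $p$ with $T(p)=\tfrac{1}{3}\cdot 1$ and then a partition $1=p_1+p_2+p_3$ into mutually orthogonal projections of equal central trace, which are therefore equivalent; this again yields $\mathcal{B}\cong M_3(p_1\mathcal{B}p_1)$. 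In each case Theorem 2.1, applied with $\mathcal{A}=\mathcal{M}=p\mathcal{B}p$ and using that Jordan derivations on a von Neumann algebra are inner, shows that every 2-local derivation on the summand is a derivation.

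Finally, assembling the countable family $\{\mathcal{A}_{I_n}\}_{n\geq 3}\cup\{\mathcal{A}_{II_1},\mathcal{A}_{II_\infty},\mathcal{A}_{III}\}$ and applying Lemma 2.12 completes the proof. The main obstacle is the matrix presentation of the type $II$ and type $III$ summands: one must genuinely use halving and comparison theory (proper infiniteness for $II_\infty$ and $III$, and the surjectivity of the center-valued trace onto central values for $II_1$) to manufacture a system of $3\times 3$ matrix units with $\sum_i e_{i,i}=1$. This is precisely where the exclusion of the $I_1$ and $I_2$ summands is essential, since on those pieces $n>2$ cannot be arranged and Theorem 2.1 does not apply.
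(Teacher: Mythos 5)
Your overall strategy is exactly the paper's: decompose $\mathcal{A}$ into a direct sum of algebras, each isomorphic to some $M_n(\mathcal{B})$ with $n>2$ and $\mathcal{B}$ a von Neumann algebra, then apply Theorem 2.1 to each summand and Lemma 2.12 to reassemble. The paper simply asserts this decomposition as ``well known''; you supply the comparison-theoretic details, and the details you give are correct as far as they go: $\mathcal{A}_{I_n}\cong M_n(\mathcal{Z}_n)$ for finite $n\geq 3$, halving for the types $II_\infty$ and $III$, the center-valued trace for the type $II_1$ part, and the verification that Jordan derivations on each base algebra are inner.

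However, there is a genuine gap: your case analysis never covers the type $I_\infty$ summand. Your decomposition $\mathcal{A}_I=\bigoplus_{n\geq 3}\mathcal{A}_{I_n}$ either ranges over finite $n$ only, in which case the $I_\infty$ part is simply missing (consistent with your calling the final family countable), or it ranges over all cardinals, in which case the claim that $\mathcal{A}_{I_n}\cong M_n(\mathcal{Z}_n)$ is an algebra to which Theorem 2.1 applies is false for infinite $n$, since Theorem 2.1 concerns finite matrix algebras only; meanwhile your halving argument is explicitly restricted to types $II_\infty$ and $III$. Concretely, for $\mathcal{A}=B(H)$ with $H$ infinite dimensional, which satisfies the hypothesis of the corollary, your proof as written establishes nothing. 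The repair is immediate: a type $I_\infty$ algebra is properly infinite, so the same splitting $1=p_1+p_2+p_3$ with $p_i\sim 1$ gives $\mathcal{A}_{I_\infty}\cong M_3\bigl(p_1\mathcal{A}_{I_\infty}p_1\bigr)$. Better still, treat the entire properly infinite part $\mathcal{A}_{I_\infty}\oplus\mathcal{A}_{II_\infty}\oplus\mathcal{A}_{III}$ as a single summand and present it as a $3\times 3$ matrix algebra over a corner; this also keeps the family countable (the finite homogeneous pieces $\mathcal{A}_{I_n}$, $3\leq n<\infty$, plus two infinite-type pieces), which matters because Lemma 2.12 is stated for countable direct sums, whereas the homogeneous type $I$ decomposition over all cardinals can be uncountable for non-separable algebras.
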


\begin{proof}
It is well known that $\mathcal{A}=\bigoplus_{i=1}^{\infty}\mathcal{A}_i$, where each $\mathcal{A}_i$ is isomorphic to a matrix algebra $M_n(\mathcal{B})$
for some von Neumann algebra $\mathcal B$ and some integer $n>2$. By Theorem 2.1 and Lemma 2.12, the result follows.
\end{proof}

\begin{remark}
In \cite{S. Ayupov 3} and \cite{S. Ayupov 4}, the authors show that each 2-local derivation on a von Neumann algebra is a derivation in other ways. By comparison, our proof is more simple. However, we can not handle the case for type $I_2$. In addition, for the case of type $I_1$, the result is trival, since each derivation on an abelian von Neumann algebra is zero.
\end{remark}

\begin{corollary}
Let $\mathcal{A}$ be a unital algebra with the inner derivation property and $n\geq6$ be a positive integer but not a prime number. Then each 2-local derivation on the matrix algebra $M_n(\mathcal{A})$ is a derivation.
\end{corollary}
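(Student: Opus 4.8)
The plan is to reduce the statement to Theorem 2.1 by combining a suitable factorization of the composite number $n$ with the algebra isomorphism $M_n(\mathcal{A})\cong M_k(M_m(\mathcal{A}))$. First I would choose the factorization $n=km$. Let $d$ be the smallest prime divisor of $n$; since $n$ is composite every other prime factor is at least $d$, so $n=d\cdot(n/d)$ with $n/d\geq d$, whence $d\leq\sqrt{n}$ and therefore $n/d\geq\sqrt{n}\geq\sqrt{6}>2$. As $n/d$ is an integer this forces $n/d\geq3$. Setting $k=n/d$ and $m=d$ gives $n=km$ with $k>2$ and $m\geq2$, and the usual block decomposition of a $km\times km$ matrix into a $k\times k$ array of $m\times m$ blocks yields an algebra isomorphism $M_n(\mathcal{A})\cong M_k(\mathcal{B})$, where $\mathcal{B}:=M_m(\mathcal{A})$ is again a unital algebra. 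It is precisely the excluded value $n=4$ for which no factorization with $k>2$ exists, since there $n/d=2$; this is exactly why the hypothesis $n\geq6$ is imposed.

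Next I would verify that $\mathcal{B}=M_m(\mathcal{A})$ satisfies the hypothesis of Theorem 2.1 with the bimodule taken to be $\mathcal{B}$ itself, that is, that every Jordan derivation from $\mathcal{B}$ into $\mathcal{B}$ is inner. I would obtain this by combining two facts. On the one hand, since $\mathcal{A}$ has the inner derivation property, Lemma 2.2 applied with module $\mathcal{M}=\mathcal{A}$ (and matrix size $m$) shows that every derivation on $\mathcal{B}=M_m(\mathcal{A})$ is an inner derivation. On the other hand, because $m\geq2$, I would invoke the classical fact that every Jordan derivation on a matrix algebra $M_m(\mathcal{A})$ is in fact a derivation. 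Combining the two, every Jordan derivation on $\mathcal{B}$ is a derivation and hence inner, which is exactly the condition required of the coefficient algebra in Theorem 2.1.

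Finally, applying Theorem 2.1 to the algebra $\mathcal{B}$ with $k>2$ in the role of the matrix size, I conclude that every 2-local derivation from $M_k(\mathcal{B})$ into itself is a derivation. Transporting this along the isomorphism $M_k(\mathcal{B})\cong M_n(\mathcal{A})$ then gives the assertion: an algebra isomorphism carries derivations to derivations, hence carries 2-local derivations to 2-local derivations and derivations to derivations, so the conclusion pulls back to $M_n(\mathcal{A})$.

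The main obstacle I expect is the second step, namely the fact that every Jordan derivation on $M_m(\mathcal{A})$ is a derivation, and in particular the borderline case $m=2$ (which occurs whenever $2\mid n$), where the Peirce-decomposition arguments are most delicate; by contrast the number-theoretic choice of factorization and the transport of the conclusion along the isomorphism are routine.
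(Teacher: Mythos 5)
Your proposal is correct and follows essentially the same route as the paper: factor $n=km$ with $k>2$ and $m>1$, identify $M_n(\mathcal{A})\cong M_k(M_m(\mathcal{A}))$, note that Jordan derivations on $M_m(\mathcal{A})$ are derivations and (via Lemma 2.2 and the inner derivation property of $\mathcal{A}$) inner, and then apply Theorem 2.1 with coefficient algebra $M_m(\mathcal{A})$. The one step you flagged as a potential obstacle --- that every Jordan derivation on $M_m(\mathcal{A})$, $m\geq 2$, is a derivation --- is exactly the result of Alizadeh cited in the paper, so there is no gap.
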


\begin{proof}
Suppose $n=rt$, where $r>2$ and $t>1$. Then $M_n(\mathcal{A})$ is isomorphic to $M_r(M_t(\mathcal{A}))$. In \cite{R. Alizadeh}, the author proves that each Jordan derivation on
$M_t(\mathcal{A})(t>1)$ is a derivation. By Theorem 2.1, the proof is complete.
\end{proof}

Through the same technique with the proof of Theorem 2.1, we can show the following theorem.

\begin{theorem}
Let $\mathcal{A}$ be a unital Banach algebra such that:\\
$(1)$ each Jordan derivation on $\mathcal{A}$ is a derivation;\\
$(2)$ for each derivation $D$ on $\mathcal{A}$, there exists an element $a$ in $\mathcal{B}$ such that $D(x)=[a,x]$ for all $x\in \mathcal{A}$, where $\mathcal{B}$ is an algebra containing $\mathcal{A}$.\\
Then each 2-local derivation on $M_n(\mathcal{A})(n>2)$ is a derivation.
\end{theorem}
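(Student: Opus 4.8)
The plan is to transcribe the proof of Theorem 2.1 almost word for word, making two substitutions: the hypothesis ``every Jordan derivation from $\mathcal{A}$ into $\mathcal{M}$ is inner'' is replaced by the pair (1)--(2), and correspondingly ``inner'' (an implementer in $M_n(\mathcal{A})$) is replaced by ``spatially inner'' (an implementer in $M_n(\mathcal{B})$). First I would establish the analogue of Lemma 2.2: every derivation $D$ on $M_n(\mathcal{A})$ is spatially inner with an implementer in $M_n(\mathcal{B})$, and, crucially, this implementer can be chosen of the special form $a=T+d_0$, where $T\in M_n(\mathcal{A})$ and $d_0=\mathrm{diag}(a_0,\dots,a_0)$ with $a_0\in\mathcal{B}$. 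This comes from the standard structure theory of derivations of matrix algebras: subtracting from $D$ a suitable inner derivation $D_T$ with $T\in M_n(\mathcal{A})$ (built from the values $D(e_{i,j})$ on the matrix units) leaves the entrywise application $d_\ast$ of a derivation $d$ of $\mathcal{A}$; by hypothesis (2) we have $d=[a_0,\,\cdot\,]$ for some $a_0\in\mathcal{B}$, and one checks $d_\ast=[d_0,\,\cdot\,]$, so that $D=D_{T+d_0}$.

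The two structural features of this implementer are exactly what the later lemmas need. Its off-diagonal entries are $a_{i,j}=T_{i,j}\in\mathcal{A}$ for $i\neq j$ (since $d_0$ is diagonal), and its diagonal entries $a_{i,i}=T_{i,i}+a_0$ commute with $e$: indeed $T_{i,i}\in\mathcal{A}$ commutes with the unit $e$, while $[a_0,e]=d(e)=0$ because any derivation annihilates the unit. Next I would prove the analogue of Lemma 2.3, following the cited argument of Ayupov and Kudaybergenov verbatim but now invoking (1) to recognise the relevant map as a \emph{derivation} (rather than merely a Jordan derivation) and (2) to realise it spatially; this yields a single $a\in M_n(\mathcal{B})$ of the above form with $\Delta=D_a$ on $D_n(\mathcal{A})$ and on $\mathrm{span}\{e_{i,j}\}$. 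Setting $\delta=\Delta-D_a$ (a genuine derivation $D_a$ mapping $M_n(\mathcal{A})$ into itself) then produces a 2-local derivation vanishing on $D_n(\mathcal{A})$ and on $\mathrm{span}\{e_{i,j}\}$, which is precisely the standing hypothesis of Lemmas 2.4--2.7.

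With these in hand, Lemmas 2.4--2.7 carry over without change. Each of them invokes the 2-local property for a specific pair of matrices to obtain a derivation $D_{x,y}-D_a$ implemented by some $a'$; by the analogue of Lemma 2.2 this $a'$ may be taken of the special form above. Every cancellation used in those proofs is of the shape ``$(e+x_{i,j})c=0$ implies $c=0$'' (or its right-handed version), applied to an \emph{off-diagonal} entry $c=a'_{i,k}$ with $i\neq k$; since such entries lie in $\mathcal{A}$ and $(e+x_{i,j})^{-1}\in\mathcal{A}$, multiplying through gives $c=0$ exactly as before. The only places where a diagonal entry $a'_{i,i}\in\mathcal{B}$ appears are in identities such as $[a',\,e_{i,i}+\widehat{x_{i,i}}\,]_{i,i}=[a'_{i,i},x_{i,i}]$ from Lemma 2.4, and these survive because $a'_{i,i}$ commutes with $e$, so the terms $a'_{i,i}e$ and $e\,a'_{i,i}$ cancel. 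Thus Lemmas 2.4 and 2.7 give $\delta=0$, whence $\Delta=D_a$ is a derivation.

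The step I expect to demand the most care is the interface between $\mathcal{A}$ and the ambient algebra $\mathcal{B}$. A priori the implementer of a derivation of $M_n(\mathcal{A})$ has all entries in $\mathcal{B}$, and since $e$ need not be a unit of $\mathcal{B}$, the invertibility cancellations of Lemmas 2.4--2.7 are not immediately legitimate. What rescues the argument is the refined form $a=T+d_0$: it forces the off-diagonal entries back into $\mathcal{A}$ and forces the diagonal entries to commute with $e$, which are precisely the two properties that the computations in Theorem 2.1 silently used (there they held automatically because $\mathcal{M}$ was a \emph{unital} bimodule). Verifying that the Ayupov--Kudaybergenov argument for the analogue of Lemma 2.3 really does deliver an implementer of this special form is where I would concentrate the technical effort.
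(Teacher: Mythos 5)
Your proposal is correct and is essentially the paper's own proof: the paper establishes this theorem by simply remarking that it follows ``through the same technique as the proof of Theorem 2.1,'' which is exactly the adaptation you carry out. In fact your write-up supplies more detail than the paper does --- in particular the observation that the implementing element can be taken of the form $T+d_0$ with $T\in M_n(\mathcal{A})$ and $d_0=\mathrm{diag}(a_0,\dots,a_0)$, so that off-diagonal entries lie in $\mathcal{A}$ and diagonal entries commute with $e$, which is precisely what keeps the cancellation arguments of Lemmas 2.4--2.7 valid when $e$ need not be a unit of $\mathcal{B}$.
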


\begin{corollary}
Let $\mathcal{A}$ be a C*-algebra. Then each 2-local derivation on $M_n(\mathcal{A})(n>2)$ is a derivation.
\end{corollary}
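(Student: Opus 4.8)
The plan is to deduce Corollary 2.17 directly from Theorem 2.16 by verifying its two hypotheses for a C*-algebra $\mathcal{A}$, with the ambient algebra $\mathcal{B}$ taken to be the enveloping von Neumann algebra (the bidual) $\mathcal{A}^{**}$. The essential point is that conditions (1) and (2) of Theorem 2.16 are exactly the weakening of the innerness requirement in Theorem 2.1 that a C*-algebra satisfies, even though derivations of a C*-algebra need not be inner in the algebra itself.

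For hypothesis (1), I would invoke the classical fact that every Jordan derivation on a C*-algebra is a derivation; this rests on semiprimeness and is the same result already used in the proof of Corollary 2.9. For hypothesis (2), I would use Sakai's theory of derivations: every derivation $D$ on a C*-algebra is automatically bounded and admits a unique weak$^{*}$-continuous extension $\overline{D}$ to the bidual $\mathcal{A}^{**}$, which is again a derivation. Since $\mathcal{A}^{**}$ is a von Neumann algebra, $\overline{D}$ is inner, so there exists $a\in\mathcal{A}^{**}$ with $\overline{D}(x)=[a,x]$; restricting to $\mathcal{A}\subseteq\mathcal{A}^{**}$ gives $D(x)=[a,x]$ for all $x\in\mathcal{A}$. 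Taking $\mathcal{B}=\mathcal{A}^{**}$ then verifies (2), and Theorem 2.16 yields the conclusion when $\mathcal{A}$ is unital.

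The main subtlety I anticipate is unitality: Theorem 2.16 assumes $\mathcal{A}$ unital, whereas a general C*-algebra need not have a unit, and this genuinely matters because the underlying argument of Theorem 2.1 repeatedly uses invertibility of $e+x_{i,j}$ and the matrix units built from $e$. To handle this I would pass to the unitization $\widetilde{\mathcal{A}}$, which is again a C*-algebra and is unital, so that $M_n(\widetilde{\mathcal{A}})$ falls under the unital case just established. Note that hypothesis (2) is unaffected by unitality, since $\mathcal{A}^{**}$ is always unital and contains $\mathcal{A}$ regardless. The step I expect to require the most care is transferring the conclusion from $M_n(\widetilde{\mathcal{A}})$ back to $M_n(\mathcal{A})$: one uses that $M_n(\mathcal{A})$ is a closed two-sided ideal in $M_n(\widetilde{\mathcal{A}})$ so that the derivations $D_{a,b}$ controlling the given 2-local derivation extend to derivations of $M_n(\widetilde{\mathcal{A}})$ (for instance through the multiplier algebra), from which linearity and the Leibniz rule descend to $M_n(\mathcal{A})$. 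Once unitality is secured, the corollary is immediate from Theorem 2.16.
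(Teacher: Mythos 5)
Your core deduction is exactly the intended one: the paper states this corollary without proof precisely because it is immediate from Theorem 2.16 once one checks, as you do, that (1) every Jordan derivation on a C*-algebra is a derivation, and (2) every derivation $D$ on a C*-algebra $\mathcal{A}$ is bounded (Sakai), extends to a weak*-continuous derivation on the von Neumann algebra $\mathcal{A}^{**}$, and is therefore of the form $D(x)=[a,x]$ with $a\in\mathcal{A}^{**}$ by the Kadison--Sakai theorem; taking $\mathcal{B}=\mathcal{A}^{**}$ verifies condition (2). For unital $\mathcal{A}$ this is complete and matches the paper.

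The non-unital addendum, however, contains a genuine gap. To invoke the unital case you would need a 2-local derivation defined on all of $M_n(\widetilde{\mathcal{A}})$, but the given map $\Delta$ has no values outside $M_n(\mathcal{A})$, and there is no canonical way to extend a nonlinear, pointwise-defined map to the unitization. Extending each controlling derivation $D_{a,b}$ to $M_n(\widetilde{\mathcal{A}})$ (via the multiplier algebra) is fine, but it adds no information about how $\Delta$ relates its values at \emph{different} points, and that relation---additivity---is precisely what must be proved; so the claim that "linearity and the Leibniz rule descend" is circular. A further wrinkle: $M_n(\widetilde{\mathcal{A}})=M_n(\mathcal{A})+M_n(\mathbb{C})$ is strictly larger than the unitization $M_n(\mathcal{A})+\mathbb{C}I$ of $M_n(\mathcal{A})$, so any extension of $\Delta$ would have to be specified on all scalar matrices, on which derivations of $M_n(\widetilde{\mathcal{A}})$ do not vanish. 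To be fair to you, the paper itself is silent on this point---Theorem 2.16 assumes a unit, and the corollary is evidently meant to be read in the unital setting, which you handle correctly---but your unitization sketch should be labeled an open issue rather than part of the proof.
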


Let $\mathcal{A}$ be a C*-algebra, $\pi$ a representation of $\mathcal{A}$, and $\mathcal{M}$ the
von Neumann algebra generated by $\pi(\mathcal A)$. Then $\pi$ is said to be a \emph{traceable representation} if there exists a faithful normal
trace $\tau$ on $\mathcal{M}^+$ such that $\pi(\mathcal A)\bigcap \mathcal{M}_\tau$ is weakly dense in $\mathcal{M}$, where $\mathcal{M}_\tau$
denotes the span of the set $\{M\in \mathcal{M}^+: \tau(M)<\infty\}$. Especially, a finite representation $\pi$(i.e. the von Neumann algebra
generated by $\pi(\mathcal A)$ is finite) can be regarded as a specific traceable representation.

\begin{theorem}
Let $\mathcal{A}$ be a C*-algebra with a faithful traceable representation $\pi$. Then each 2-local derivation on $\mathcal{A}$ is a derivation.
\end{theorem}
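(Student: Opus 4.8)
The plan is to reduce the statement to Johnson's theorem (cited in the Introduction), which asserts that every \emph{linear} local derivation on a C*-algebra is a derivation. Observe first that a 2-local derivation $\Delta$ is automatically a local derivation in the pointwise sense: taking $b=a$ in the defining condition produces, for each $a$, a derivation $D_{a,a}$ with $\Delta(a)=D_{a,a}(a)$. Moreover $\Delta$ is homogeneous. Hence once I show that $\Delta$ is additive, $\Delta$ will be complex-linear, and Johnson's theorem will finish the proof. So the whole burden is additivity, $\Delta(a+b)=\Delta(a)+\Delta(b)$ for all $a,b\in\mathcal{A}$.

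First I would fix the ambient geometry. Since $\pi$ is faithful, identify $\mathcal{A}$ with $\pi(\mathcal{A})$, a weakly dense C*-subalgebra of $\mathcal{M}$, which is semifinite with respect to the faithful normal trace $\tau$, and for which $\mathcal{A}\cap\mathcal{M}_\tau$ is weakly dense in $\mathcal{M}$. By Sakai's theorem every derivation of $\mathcal{A}$, realized in this representation, is spatial and implemented in the weak closure $\pi(\mathcal{A})''=\mathcal{M}$: for each pair $a,b$ the witnessing derivation has the form $D_{a,b}=[c_{a,b},\cdot\,]$ with $c_{a,b}\in\mathcal{M}$, so that $\Delta(a)=[c_{a,b},a]$ and $\Delta(b)=[c_{a,b},b]$. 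The implementing element is unique modulo the center $\mathcal{Z}(\mathcal{M})$, since $[c,\cdot\,]=[c',\cdot\,]$ on the weakly dense set $\mathcal{A}$ forces $c-c'\in\mathcal{Z}(\mathcal{M})$.

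The trace enters to pin these implementing elements together. Endow $\mathcal{M}_\tau$ with the inner product $\langle\xi,\eta\rangle=\tau(\eta^{*}\xi)$; faithfulness and normality of $\tau$ make $L^2(\mathcal{M},\tau)$ a Hilbert space on which $\mathcal{M}$ acts and in which $\mathcal{Z}(\mathcal{M})$ sits as a closed subspace. Using the uniqueness-modulo-center, I would select in each implementing coset the canonical representative orthogonal to $\mathcal{Z}(\mathcal{M})$. A direct computation using the 2-local condition for the pairs $(a+b,a)$, $(a+b,b)$ and $(a,b)$ expresses the additivity defect symmetrically as $\Delta(a+b)-\Delta(a)-\Delta(b)=[\,d,b\,]=[\,d',a\,]$, where $d\in\mathcal{M}$ commutes with $a$ and $d'\in\mathcal{M}$ commutes with $b$. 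The role of the trace normalization, together with weak density of $\mathcal{A}\cap\mathcal{M}_\tau$ to supply enough test elements, is then to force these $d,d'$ into $\mathcal{Z}(\mathcal{M})$ and hence the defect to vanish. Equivalently, this procedure should manufacture a single global element $a\in\mathcal{M}$ with $\Delta(x)=[a,x]$ for every $x\in\mathcal{A}$, which would yield linearity and the derivation identity simultaneously.

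The hard part is exactly this gluing step. Pairwise 2-locality alone never kills the defect: it only certifies that $\Delta(a+b)-\Delta(a)-\Delta(b)$ is a commutator $[d,b]$ with $d$ commuting with $a$, which is not formally zero. It is precisely here that semifiniteness and the faithful trace are indispensable — the $L^2$-orthogonality to the center makes the normalized implementing element unique and norm-controlled, and the weakly dense trace ideal $\mathcal{A}\cap\mathcal{M}_\tau$ detects equalities in $\mathcal{M}$. I expect to first settle additivity for self-adjoint $a,b$ by a spectral and approximation argument played against the trace and the normality of $\tau$, and then recover the general case from the self-adjoint one by decomposing into real and imaginary parts through the homogeneity of $\Delta$.
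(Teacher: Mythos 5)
Your overall reduction is the same as the paper's: show $\Delta$ (transported to $\pi(\mathcal{A})$) is additive, combine with homogeneity to get linearity, and then invoke Johnson's theorem that linear local derivations on C*-algebras are derivations. The setup is also right: the witnessing derivations extend to $\mathcal{M}=\pi(\mathcal{A})''$ and are implemented by elements of $\mathcal{M}$. But the proof has a genuine gap exactly where you say the hard part is: the additivity step is never carried out, only described in the conditional ("I expect to\dots", "this procedure should manufacture\dots"). Worse, the mechanism you sketch is not viable as stated. The implementing elements $c_{a,b}$ are arbitrary elements of $\mathcal{M}$; when $\tau$ is infinite (only $\mathcal{M}_\tau$ carries finite trace, and $\mathcal{M}$ itself need not be finite) there is no reason $c_{a,b}$, or even $\mathcal{Z}(\mathcal{M})$, embeds in $L^2(\mathcal{M},\tau)$, so "select the representative orthogonal to the center" is undefined. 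And even granting some normalization, you give no argument forcing the defect elements $d,d'$ into the center; as you yourself note, 2-locality alone never kills the defect, so the theorem's entire content is precisely the step you leave open.

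The paper's device, which your proposal misses, is a trace pairing rather than a gluing of implementing elements. Since $\mathcal{M}_\tau$ is a two-sided ideal, $\mathcal{A}_\tau=\pi(\mathcal{A})\cap\mathcal{M}_\tau$ is a weakly dense two-sided ideal of $\pi(\mathcal{A})$. For $x\in\pi(\mathcal{A})$ and $y\in\mathcal{A}_\tau$, applying the 2-local hypothesis to the single pair $(x,y)$ gives one $m\in\mathcal{M}$ with $\delta(x)=[m,x]$, $\delta(y)=[m,y]$, and cyclicity of $\tau$ (legitimate because $y$ lies in the trace ideal) yields
\begin{equation*}
\tau(\delta(x)y)=\tau(mxy)-\tau(xmy)=\tau(xym)-\tau(xmy)=-\tau(x\delta(y)).
\end{equation*}
The right-hand side is manifestly additive in $x$, so $u=\delta(a+b)-\delta(a)-\delta(b)$ satisfies $\tau(uy)=0$ for every $y\in\mathcal{A}_\tau$; taking $y$ of the form $yy^{*}u^{*}$ (still in the ideal $\mathcal{A}_\tau$) gives $\tau\bigl((uy)(uy)^{*}\bigr)=0$, hence $uy=0$ by faithfulness, hence $u=0$ by weak density. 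This requires no canonical choice of implementing element, no centrality argument, no self-adjoint reduction and no spectral approximation — the points where your outline stalls — and it is the step you would need to supply to make the proposal into a proof.
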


\begin{proof}
Let $\mathcal{M}$ be the von Neumann algebra generated by $\pi(\mathcal A)$, and  $\tau$ a faithful normal
trace on $\mathcal{M}^+$ such that $\mathcal{A}_\tau$ is weakly dense in $\mathcal{M}$, where $\mathcal{A}_\tau$
denotes $\pi(\mathcal A)\bigcap \mathcal{M}_\tau$. It is known that $\mathcal{M}_\tau$ is a two-side ideal of $\mathcal M$,
thus $\mathcal{A}_\tau$ is also a two-side ideal of $\pi(\mathcal A)$.

For a 2-local derivation $\Delta$ on $\mathcal{A}$, define
$\delta=\pi\circ\Delta\circ\pi^{-1}$. Then $\delta$ is a 2-local derivation on $\pi(\mathcal A)$.
For each $x\in \pi(\mathcal A)$ and $y\in \mathcal{A}_\tau$, there exists an element $m\in \mathcal M$
such that $\delta(x)=[m,x]$ and $\delta(y)=[m,y]$. Hence
\begin{align*}
\tau(\delta(x)y)
&=\tau((mx-xm)y)=\tau(mxy)-\tau(xmy)\\
&=\tau(xym)-\tau(xmy)=\tau(x(ym-my))\\
&=-\tau(x\delta(y)).
\end{align*}
For each $a,b\in\pi(\mathcal A)$ and $x\in \mathcal{A}_\tau$, we have
\begin{align*}
\tau(\delta(a+b)x)
&=-\tau((a+b)\delta(x))=-\tau(a\delta(x))-\tau(b\delta(x))\\
&=\tau(\delta(a)x)+\tau(\delta(b)x)\\
&=\tau((\delta(a)+\delta(b))x).
\end{align*}
It means that $\tau(ux)=0$ for any $x\in \mathcal{A}_\tau$, where
$u$ denotes $\delta(a+b)-\delta(a)-\delta(b)$.
For any $y\in \mathcal{A}_\tau$, by taking $x=yy^*u^*$, we obtain
$\tau(uyy^*u^*)=0$. Since $\tau$ is faithful and $\mathcal{A}_\tau$ is weakly dense,
we have $uy=0$ and thus $u=0$.

Hence $\delta$ is additive, and thus a local derivation, moreover, a derivation. It follows that
$\Delta=\pi^{-1}\circ\delta\circ\pi$ is a derivation on $\mathcal A$. The proof is complete.
\end{proof}

As direct applications of the above theorem, we have the following corollaries.

\begin{corollary}
Each 2-local derivation on a UHF C*-algebra is a derivation.
\end{corollary}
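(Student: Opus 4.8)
The plan is to exhibit a faithful traceable representation of the UHF algebra and then quote Theorem 2.19 verbatim. Recall that a UHF C*-algebra $\mathcal{A}$ arises as the norm closure of an increasing union $\bigcup_{k} M_{d_k}(\mathbb{C})$ with $d_k \mid d_{k+1}$, and that it carries a tracial state $\tau$ obtained as the limit of the normalized traces on the successive matrix blocks. The first step is to observe that $\tau$ is faithful. This follows immediately from the simplicity of $\mathcal{A}$: the set $\{x\in\mathcal{A} : \tau(x^*x)=0\}$ is a closed two-sided ideal, and since $\tau$ is a nonzero state this ideal is proper, hence it must be $\{0\}$, so $\tau(x^*x)=0$ forces $x=0$.

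Next I would form the GNS representation $\pi=\pi_\tau$ associated with $\tau$. Since $\tau$ is faithful, $\pi$ is a faithful representation of $\mathcal{A}$. Let $\mathcal{M}=\pi(\mathcal{A})''$ be the von Neumann algebra it generates. The structural fact I would invoke here is that $\mathcal{M}$ is a finite von Neumann algebra: the trace $\tau$ extends to a faithful normal tracial state on $\mathcal{M}$ (indeed $\mathcal{M}$ is the hyperfinite $\mathrm{II}_1$ factor when $\mathcal{A}$ is infinite-dimensional, and a full matrix algebra in the degenerate finite-dimensional case). In either case $\pi$ is a finite representation in the sense defined just before Theorem 2.19.

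It then remains only to see that a finite representation is traceable, which is exactly the observation recorded at the end of the paragraph preceding Theorem 2.19. Concretely, because $\mathcal{M}$ is finite the extended trace is finite on all of $\mathcal{M}^{+}$, so $\mathcal{M}_\tau=\mathcal{M}$ and hence $\pi(\mathcal{A})\cap\mathcal{M}_\tau=\pi(\mathcal{A})$, which is weakly dense in $\mathcal{M}$ by the von Neumann density theorem applied to the GNS construction. Thus $\pi$ is a faithful traceable representation, and Theorem 2.19 applies directly to yield that every 2-local derivation on $\mathcal{A}$ is a derivation. The only points demanding care are the faithfulness of $\tau$ and the finiteness of $\mathcal{M}$; both are standard consequences of the inductive-limit structure and simplicity of UHF algebras, so I do not anticipate a genuine obstacle once the finite trace-representation has been identified.
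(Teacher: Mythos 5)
Your proposal is correct and is exactly the argument the paper intends: Corollary 2.20 is stated as a direct application of the preceding theorem on C*-algebras with a faithful traceable representation (Theorem 2.18 in the paper's numbering, your ``Theorem 2.19''), and you supply precisely the standard details left implicit --- faithfulness of the canonical tracial state via simplicity, faithfulness of the GNS representation, finiteness of the generated von Neumann algebra, and the observation that a finite representation is traceable since $\mathcal{M}_\tau=\mathcal{M}$ and $\pi(\mathcal{A})$ is weakly dense in $\pi(\mathcal{A})''$. No gaps; this matches the paper's route.
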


\begin{corollary}
Each 2-local derivation on the Jiang-Su algebra is a derivation.
\end{corollary}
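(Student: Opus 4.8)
The plan is to reduce the statement to Theorem 2.18 by exhibiting a faithful traceable representation of the Jiang-Su algebra $\mathcal{Z}$. Recall that $\mathcal{Z}$ is a unital, simple, separable C*-algebra possessing a unique tracial state $\tau$. The first step is to observe that $\tau$ is faithful: since $\mathcal{Z}$ is simple, the closed two-sided ideal generated by any element $a$ with $\tau(a^*a)=0$ must be trivial, so $\tau(a^*a)=0$ forces $a=0$. Thus $\mathcal{Z}$ admits a faithful trace, and this is really the only external input needed.

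Next I would pass to the GNS representation $\pi_\tau$ associated with $\tau$, acting on the Hilbert space $H_\tau$ with cyclic vector $\xi_\tau$. Because $\tau$ is faithful, $\pi_\tau$ is a faithful representation of $\mathcal{Z}$. Let $\mathcal{M}=\pi_\tau(\mathcal{Z})''$ be the von Neumann algebra it generates. The tracial property of $\tau$ makes $\xi_\tau$ a separating vector for $\mathcal{M}$, so the vector state $\omega_{\xi_\tau}$ restricts to $\tau$ on $\pi_\tau(\mathcal{Z})$, extends to a normal tracial state on $\mathcal{M}$, and is faithful since its implementing vector is separating. Hence $\mathcal{M}$ carries a faithful normal tracial state and is therefore a finite von Neumann algebra; that is, $\pi_\tau$ is a faithful finite representation.

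Finally, as noted just before Theorem 2.18, a finite representation is automatically traceable: when $\mathcal{M}$ is finite the trace is everywhere finite, so $\mathcal{M}_\tau=\mathcal{M}$ and therefore $\pi_\tau(\mathcal{Z})\cap\mathcal{M}_\tau=\pi_\tau(\mathcal{Z})$ is weakly dense in $\mathcal{M}$ by construction. Thus $\pi_\tau$ is a faithful traceable representation of $\mathcal{Z}$, and Theorem 2.18 immediately yields that every 2-local derivation on $\mathcal{Z}$ is a derivation. There is no serious obstacle in the argument; the only points requiring care are the faithfulness of the normal extension of $\tau$ to $\mathcal{M}$ (which follows from $\xi_\tau$ being separating) and the finiteness of $\mathcal{M}$, both standard consequences of $\mathcal{Z}$ possessing a faithful trace. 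The substantive content of the corollary therefore amounts to recalling that the Jiang-Su algebra admits a faithful tracial state.
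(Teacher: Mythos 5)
Your proposal is correct and follows exactly the route the paper intends: the paper states this corollary as a ``direct application'' of Theorem 2.18, relying on the fact that the Jiang-Su algebra's unique tracial state is faithful (by simplicity), so its GNS representation is a faithful finite --- hence traceable --- representation. You have merely written out the standard details that the paper leaves implicit, and all of them (faithfulness of $\tau$, the separating vector, finiteness of $\mathcal{M}$, weak density) check out.
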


\section{Local and 2-local Lie derivations}\

In this section, we study local and 2-local Lie derivations on some algebras. The main results are as follows.
\begin{theorem}
Let $\mathcal{B}$ be a Banach algebra over $\mathbb{C}$ satisfying the following conditions:\\
$(1)$ each Lie derivation on $\mathcal{B}$ is standard;\\
$(2)$ each derivation on $\mathcal{B}$ is inner;\\
$(3)$ each local derivation on $\mathcal{B}$ is a derivation;\\
$(4)$ every nonzero element in $\mathcal{Z}(\mathcal{B})$$(the ~center ~of ~\mathcal{B})$ can not be written in the form $\sum_{i=1}^{n}[A_i,B_i]$, where $A_i,B_i\in \mathcal{B}.$\\
Then each local Lie derivation $\varphi$ on $\mathcal{B}$ is a Lie derivation.
\end{theorem}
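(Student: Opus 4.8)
The plan is to exhibit $\varphi$ in the standard form $\delta+\gamma$, where $\delta$ is a derivation and $\gamma$ is a central-valued linear map annihilating all commutators; by the definition of standard Lie derivation this is exactly what must be shown. Write $\mathcal{C}$ for the linear span of all commutators $[A,B]$ with $A,B\in\mathcal{B}$. Condition $(4)$ says precisely that $\mathcal{Z}(\mathcal{B})\cap\mathcal{C}=\{0\}$, so the sum $\mathcal{Z}(\mathcal{B})\oplus\mathcal{C}$ is direct; let $P$ denote the associated linear projection of $\mathcal{Z}(\mathcal{B})\oplus\mathcal{C}$ onto $\mathcal{Z}(\mathcal{B})$ along $\mathcal{C}$.

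First I would check that $\varphi$ takes values in $\mathcal{Z}(\mathcal{B})\oplus\mathcal{C}$. Fix $a\in\mathcal{B}$; the local Lie derivation property supplies a Lie derivation $\varphi_a$ with $\varphi(a)=\varphi_a(a)$, and by $(1)$ we may write $\varphi_a=D_a+\tau_a$ with $D_a$ a derivation and $\tau_a$ mapping into $\mathcal{Z}(\mathcal{B})$. By $(2)$, $D_a$ is inner, say $D_a(x)=[m_a,x]$, so that
\[
\varphi(a)=[m_a,a]+\tau_a(a),\qquad [m_a,a]\in\mathcal{C},\quad \tau_a(a)\in\mathcal{Z}(\mathcal{B}).
\]
Hence $\varphi(a)\in\mathcal{Z}(\mathcal{B})\oplus\mathcal{C}$, and I can set $\gamma:=P\circ\varphi:\mathcal{B}\to\mathcal{Z}(\mathcal{B})$ and $\delta:=\varphi-\gamma$, both linear. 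The directness of the sum forces $\gamma(a)=\tau_a(a)$ \emph{independently} of the choice of $\varphi_a$: any two decompositions of $\varphi(a)$ as a central term plus an element of $\mathcal{C}$ differ by an element of $\mathcal{Z}(\mathcal{B})\cap\mathcal{C}=\{0\}$. Consequently $\delta(a)=[m_a,a]=D_a(a)$ for every $a$, so $\delta$ is a local derivation, and by $(3)$ it is a derivation.

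Next I would show that $\gamma$ kills commutators. Given $c=[a,b]$, the local Lie derivation property yields a standard Lie derivation $\psi=D+\tau$ with $\varphi(c)=\psi(c)$. Since $\tau$ vanishes on commutators, $\varphi([a,b])=D([a,b])=[D(a),b]+[a,D(b)]\in\mathcal{C}$, and therefore $\gamma([a,b])=P(\varphi([a,b]))=0$. Combining this with the previous step, $\delta$ is a derivation, $\gamma$ is central-valued, and $\gamma$ annihilates commutators, so a direct check gives
\[
\varphi[a,b]=\delta[a,b]=[\delta a,b]+[a,\delta b]=[\varphi a,b]+[a,\varphi b],
\]
the $\gamma$-terms dropping out because $\gamma a,\gamma b$ are central; thus $\varphi$ is a Lie derivation.

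The step where I expect the real work to lie is the well-definedness and linearity of $\gamma$. Everything hinges on condition $(4)$: it is what guarantees that the central part of each $\varphi(a)$ is uniquely determined, so that $\gamma=P\circ\varphi$ is a genuine linear map not depending on the auxiliary data $D_a,\tau_a$ (which a priori vary wildly with $a$ and with the choice of witnessing Lie derivation). Once this rigidity is in place, conditions $(1)$–$(3)$ turn $\delta$ into an honest derivation and make the commutator computation routine, so the argument closes.
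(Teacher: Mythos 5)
Your proposal is correct and follows essentially the same route as the paper's proof: a pointwise decomposition $\varphi(a)=[m_a,a]+\tau_a(a)$ via conditions $(1)$--$(2)$, uniqueness of the central-plus-commutator splitting via condition $(4)$, and condition $(3)$ to upgrade the pointwise-inner part to a genuine derivation. The only difference is one of packaging: you get linearity of $\gamma$ and $\delta$ for free by composing $\varphi$ with the projection $P$ of $\mathcal{Z}(\mathcal{B})\oplus\mathcal{C}$ onto $\mathcal{Z}(\mathcal{B})$, whereas the paper verifies additivity and homogeneity of $\tau$ and $\delta$ by hand using exactly the same directness argument.
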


\begin{proof}
By the definition of local Lie derivation, for any $A\in \mathcal{B}$, there exists a Lie derivation $\varphi_{A}$ such that $\varphi_{A}(A)=\varphi(A)$. Since $\varphi_{A}$ is standard, $\varphi_{A}=\delta_A+\tau_A$, where $\delta_A$ is a derivation and $\tau_A$ is a linear mapping from $\mathcal{B}$ into $\mathcal{Z}(\mathcal{B})$ such that $\tau_A[X,Y]=0$, for all $X,Y\in \mathcal{B}.$ Thus
$$\varphi(A)=\delta_A(A)+\tau_A(A)=[A,T_A]+\tau_A(A)$$
for some $T_A\in \mathcal B$, which means that $\varphi$ has a decomposition at each point. In the following we prove that the decomposition is unique. Suppose $$\varphi(A)=\delta_A^{'}(A)+\tau_A^{'}(A)=[A,T^{'}_A]+\tau_A^{'}(A).$$ Then
$$[A,T_A]+\tau_A(A)=[A,T^{'}_A]+\tau_A^{'}(A).$$ Thus
$$[A,T_A]-[A,T^{'}_A]=\tau_A^{'}(A)-\tau_A(A).$$
Since every nonzero element in $\mathcal{Z}(\mathcal{B})$ can not be written in the form $\sum_{i=1}^{n}[A_i,B_i]$, we obtain
$$[A,T_A]-[A,T^{'}_A]=\tau_A^{'}(A)-\tau_A(A)=0,$$
and so
$$\delta_A(A)=\delta_A^{'}(A),\tau_A(A)=\tau_A^{'}(A).$$
Now we can define
$$\delta(A)=\delta_A(A)=[A,T_A] ~~\mathrm{and}~~ \tau(A)=\tau_A(A),$$
for every $A\in \mathcal{B}$.
We shall prove $\delta$ and $\tau$ are linear.
For all $A,B\in \mathcal{B}$, since $$\varphi(A+B)=\varphi(A)+\varphi(B),$$
we obtain that
$$[A,T_A]+[B,T_B]-[A+B,T_{A+B}]=\tau(A+B) -\tau(A) -\tau(B) .$$
Hence $$[A+B,T_{A+B}]=[A,T_A]+[B,T_B],~~\tau(A+B) =\tau(A) +\tau(B).$$
It means that $\delta$ and $\tau$ are additive. Similarly, we can prove that $\delta$ and $\tau$ are homogeneous, and so linear. Hence $\delta$ is a local derivation, by assumption, a derivation, and $\tau$ is a linear mapping from $\mathcal{B}$ into $\mathcal{Z}(\mathcal{B})$ such that $\tau[X,Y]=0$, for all $X,Y\in \mathcal{B}.$ It follows that $\varphi$ is a Lie derivation. The proof is complete.
\end{proof}

\begin{corollary}
Each local Lie derivation on a factor von Neumann algebra $\mathcal{M}$ is a Lie derivation.
\end{corollary}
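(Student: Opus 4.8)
The plan is to deduce the corollary from Theorem 3.1 by verifying its four hypotheses for a factor von Neumann algebra $\mathcal{M}$, but I anticipate having to split into the finite and the properly infinite case, because hypothesis (4) is sensitive to the type of $\mathcal{M}$.

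Conditions (1)--(3) hold for every von Neumann algebra and require only citation. Condition (1) is the Mathieu--Villena theorem (cited as \cite{M. Mathieu}) that every Lie derivation on a C*-algebra is standard. Condition (2) is the classical Kadison--Sakai theorem that every derivation of a von Neumann algebra is inner. Condition (3) follows from the Kadison and Johnson results recalled in the introduction: $\mathcal{M}$ is the dual of its predual and hence a dual Banach $\mathcal{M}$-module, so every (automatically continuous) local derivation on it is a derivation.

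The real work is condition (4), where the triviality of the center, $\mathcal{Z}(\mathcal{M}) = \mathbb{C}I$, is decisive. If $\mathcal{M}$ is a finite factor it carries a faithful normal tracial state $\tau$; since $\tau([A,B]) = 0$, every finite sum of commutators has trace $0$, whereas $\tau(\lambda I) = \lambda \neq 0$ for $\lambda \neq 0$. Hence no nonzero central element is a sum of commutators, condition (4) holds, and Theorem 3.1 applies verbatim. The step I expect to be the main obstacle is that condition (4) fails outright when $\mathcal{M}$ is properly infinite: choosing isometries $S_1, S_2$ with $S_1^*S_1 = S_2^*S_2 = I$ and $S_1 S_1^* + S_2 S_2^* = I$, one computes $[S_1^*,S_1] + [S_2^*,S_2] = 2I - (S_1 S_1^* + S_2 S_2^*) = I$, so the identity is a sum of two commutators and the hypotheses of Theorem 3.1 are not met.

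I would therefore handle the properly infinite case directly, using the known fact that in a properly infinite von Neumann algebra the linear span of commutators is the whole algebra. Then any linear map that annihilates all commutators must vanish identically, so in the standard decomposition $\varphi_A = \delta_A + \tau_A$ of each Lie derivation the central part $\tau_A$ is zero, and every Lie derivation on $\mathcal{M}$ is already a derivation. Consequently a local Lie derivation $\varphi$ on $\mathcal{M}$ agrees at each point with a derivation, hence is a local derivation, which is a derivation by condition (3) and therefore a Lie derivation. Assembling the finite and properly infinite cases yields the corollary.
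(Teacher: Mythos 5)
Your proof is correct, and its logical skeleton coincides with the paper's: a two-case argument in which one case feeds condition (4) into Theorem 3.1 and the other case shows that all Lie derivations are already derivations because commutators span the algebra, whence a local Lie derivation is a local derivation, hence a derivation by condition (3). The difference lies in how the dichotomy is organized. The paper splits on the tautologically exhaustive alternative ``either $I$ is a sum of commutators or it is not,'' and handles both branches with a single citation (Bre\v{s}ar--Kissin--Shulman: every element of a factor has the form $\lambda I+\sum_{i=1}^{n}[A_i,B_i]$); this makes the proof very short and entirely type-free. You instead split along the type classification (finite versus properly infinite, which is exhaustive for factors since the center is trivial) and verify concretely which branch each type falls into: the faithful normal tracial state gives condition (4) for finite factors, and the explicit isometry computation $[S_1^*,S_1]+[S_2^*,S_2]=I$ together with the spanning property of commutators handles the properly infinite case. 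Your version is longer but more informative --- it identifies exactly when condition (4) holds and why --- while the paper's version buys brevity by never needing to decide which factors satisfy which alternative. Both arguments are complete and rest on the same background results.
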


\begin{proof}
It is known that every element in $\mathcal{M}$ can be written in the form $\lambda I+\sum_{i=1}^{n}[A_i,B_i]$, where $\lambda\in \mathbb{C}$ and $A_i,B_i\in \mathcal{M}$(see, \cite{M. Bresar}). If the unit $I$ can not be written in the form $\sum_{i=1}^{n}[A_i,B_i]$, where $A_i,B_i\in \mathcal{M}$, then by Theorem 3.1, the result follows. Otherwise, every element in $\mathcal{M}$ has the form $\sum_{i=1}^{n}[A_i,B_i]$. Hence each Lie derivation is a derivations, so every local Lie derivation is a Lie derivation.
\end{proof}

The following result is proved in \cite{L. Chen2}. We show it by Theorem 3.1 as an alternative method.
\begin{corollary}
Each local Lie derivation on a nest algebra $\mathcal{A}=alg \mathcal{N}$ is a Lie derivation.
\end{corollary}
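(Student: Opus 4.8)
The plan is to apply Theorem 3.1 with $\mathcal{B}=alg\,\mathcal{N}$, so the task reduces to verifying its four hypotheses for a nest algebra. Conditions (1)--(3) are available from the literature. A nest is a (completely distributive) chain of projections, so $alg\,\mathcal{N}$ is a CDCSL algebra and hence every Lie derivation on it is standard by \cite{F. Lu3}, which gives (1); every derivation of a nest algebra is inner by Christensen's theorem \cite{E. Christensen}, giving (2); and it is known that each local derivation on a nest algebra is a derivation (see, e.g., \cite{R. Crist}), giving (3). Thus the entire burden falls on condition (4), which concerns the center.

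First I would record that the center of $alg\,\mathcal{N}$ is $\mathbb{C}I$: a central element commutes with every projection $P_N$ ($N\in\mathcal N$), hence is diagonal, and commuting in addition with the (strictly triangular) operators in the algebra forces it to be a scalar multiple of $I$. Consequently condition (4) is exactly the assertion that $I$ cannot be written as a finite sum of commutators $\sum_i[A_i,B_i]$ with $A_i,B_i\in alg\,\mathcal{N}$ (if some nonzero $\lambda I$ were such a sum, then so would $I$ be). This does not hold universally: for the trivial nest $\{0,H\}$ one has $alg\,\mathcal{N}=B(H)$, and for infinite-dimensional $H$ the identity is even a single commutator. I would therefore split into two cases, exactly as in the proof of Corollary 3.2.

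In the first case $I$ is not a finite sum of commutators; then condition (4) holds outright and Theorem 3.1 applies directly. The main obstacle is the second case, where $I=\sum_i[A_i,B_i]$. Here I would argue, mirroring Corollary 3.2, that every Lie derivation is automatically a derivation, which suffices: a local Lie derivation is then a local derivation, hence a derivation by (3), hence a Lie derivation. The crux is the structural fact that, in this case, every element of $alg\,\mathcal{N}$ is a finite sum of commutators. Granting this, if $\varphi=\delta+\tau$ is the standard form of a Lie derivation, with $\delta$ a derivation and $\tau$ mapping into $\mathbb{C}I$ and annihilating all commutators, then $\tau$ vanishes on the linear span of the commutators, i.e. $\tau\equiv 0$ and $\varphi=\delta$ is a derivation.

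The hardest point is thus to show that ``$I$ a sum of commutators'' forces every element to be a sum of commutators. I would approach this through the gap (atom) decomposition of the nest. A strictly triangular operator $\widehat{B}$ is always a commutator, since $\widehat{B}=[P,\widehat{B}]$ for a suitable projection $P\in\mathcal N$ whose range contains the range of $\widehat B$ and whose kernel contains its initial space; so only the block-diagonal part needs attention. On each infinite-dimensional gap the compression of $alg\,\mathcal{N}$ is a full $B(K)$, whose commutator span is everything, whereas a finite-dimensional gap carries a trace that obstructs $I$ being a sum of commutators and so places such nests in the first case. Making this dichotomy precise---especially for continuous nests, where one must propagate the absence of a commutator-annihilating functional from $I$ to all elements---is where I expect the real effort to lie. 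A convenient route is to show that the quotient of $alg\,\mathcal{N}$ by the linear span of its commutators is at most one-dimensional and spanned by the image of $I$; then $I$ being a sum of commutators collapses this quotient to zero, which is precisely the statement that every element is a sum of commutators.
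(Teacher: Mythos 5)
Your overall strategy matches the paper's in outline (either condition (4) of Theorem 3.1 can be verified, or every element is a finite sum of commutators and the statement degenerates), your verification of conditions (1)--(3) and of $\mathcal{Z}(alg\,\mathcal{N})=\mathbb{C}I$ is fine, and your first case is handled correctly. But the proof stands or falls on the structural claim underlying your second case --- that if $I$ is a finite sum of commutators then \emph{every} element of $alg\,\mathcal{N}$ is --- and you do not prove it; worse, the ``convenient route'' you propose for it is false. The quotient of $alg\,\mathcal{N}$ by the linear span of its commutators need not be at most one-dimensional: for the maximal nest on $\mathbb{C}^n$ ($n\geq 2$), $alg\,\mathcal{N}$ is the algebra of upper triangular matrices, every commutator has zero diagonal (for upper triangular $A,B$ one has $(AB)_{ii}=A_{ii}B_{ii}$), so the span of the commutators is exactly the strictly upper triangular matrices and the quotient has dimension $n$. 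More generally, each finite-dimensional atom of a nest yields its own commutator-annihilating functional (trace composed with compression), so any nest with two or more such atoms also violates your claimed lemma. The implication you actually need is weaker, but any proof of it must in effect run the dichotomy ``finite-dimensional atom exists / does not exist,'' and the atomless (in particular continuous) case is exactly the part you flag as ``real effort'' and leave open.

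The paper closes precisely this hole with two ingredients. If $\mathcal{A}$ is of infinite multiplicity, it cites \cite[Theorem 4.7]{L. Marcoux} for the fact that every element of $\mathcal{A}$ is a finite sum of commutators --- a substantial theorem, not something to rederive in passing --- whence every Lie derivation is a derivation and the conclusion follows from condition (3). Otherwise $\mathcal{N}$ has a finite-dimensional atom $P$; since compression $\sigma(A)=PAP$ is a homomorphism of $\mathcal{A}$ onto the finite-dimensional algebra $P\mathcal{A}P$, an identity $I=\sum_{i=1}^{n}[A_i,B_i]$ would give $P=\sum_{i=1}^{n}[\sigma(A_i),\sigma(B_i)]$, contradicting the existence of the trace on $P\mathcal{A}P$; hence $I$ is not a sum of commutators, condition (4) holds, and Theorem 3.1 applies. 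Your sketch of the finite-dimensional-gap obstruction is essentially this atom argument, but for nests without finite-dimensional atoms you would still need the Marcoux--Sourour input (or an equivalent amount of work), so as written the proposal is incomplete at its hardest point.
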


\begin{proof}
If $\mathcal{A}$ is of infinite multiplicity, then every $A\in \mathcal{A}$ has the form $\sum_{i=1}^{n}[A_i,B_i]$, where $A_i,B_i\in \mathcal{A}$(\cite[Theorem 4.7]{L. Marcoux}). In this case, all Lie derivations are derivations, and so the result is trival.

Otherwise, suppose $\mathcal{A}$ is not of infinite multiplicity. Then $\mathcal{N}$ has a finite-dimensional atom $P$. It is known that $\mathcal{A}$ satisfies the conditions (1) to (3) of Theorem 3.1(see in \cite{F. Lu3, E. Christensen, D. Hadwin}), so it is sufficient to check the condition (4). We know $\mathcal{Z}(\mathcal{A})=\mathbb{C}I$. Assume that the unit $I$ can be written in the form $\sum_{i=1}^{n}[A_i,B_i]$. It is known that the linear map $\sigma:\mathcal{A}\rightarrow P\mathcal{A}P$ defined by $\sigma(A)=PAP$ for all $A \in\mathcal{A}$ is a homomorphism. So we have
$$P=PIP=\sum_{i=1}^{n}P[A_i,B_i]P=\sum_{i=1}^{n}[\sigma(A_i),\sigma(B_i)].$$
It follows that there is no trace on $P\mathcal{A}P$, which contradicts that $P\mathcal{A}P$ has finite dimension. Therefore $I$ can not be written in the form $\sum_{i=1}^{n}[A_i,B_i]$.
The proof is complete.
\end{proof}

\begin{theorem}
Let $\mathcal{B}$ be an algebra over $\mathbb{C}$ satisfying the following conditions:\\
$(1)$ each Lie derivation on $\mathcal{B}$ is standard;\\
$(2)$ each derivation on $\mathcal{B}$ is inner;\\
$(3)$ each local derivation on $\mathcal{B}$ is a derivation;\\
$(4)$ there exists a center-valued trace on $\mathcal{B}$.\\
Then each local Lie derivation $\varphi$ on $\mathcal{B}$ is a Lie derivation.
\end{theorem}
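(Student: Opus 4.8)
The plan is to reduce Theorem 3.5 to Theorem 3.1. Conditions $(1)$, $(2)$, and $(3)$ are word-for-word identical in the two statements, so the entire content of the new theorem lies in showing that the existence of a center-valued trace forces condition $(4)$ of Theorem 3.1 to hold, namely that no nonzero element of $\mathcal{Z}(\mathcal{B})$ can be written as a finite sum of commutators $\sum_{i=1}^{n}[A_i,B_i]$. Once this implication is established, Theorem 3.1 applies verbatim and delivers the conclusion.

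To carry this out I would let $\Phi:\mathcal{B}\rightarrow\mathcal{Z}(\mathcal{B})$ denote the center-valued trace and exploit its two defining algebraic properties: it is linear and satisfies the trace identity $\Phi(XY)=\Phi(YX)$ for all $X,Y\in\mathcal{B}$, and it restricts to the identity on the center, i.e. $\Phi(Z)=Z$ for every $Z\in\mathcal{Z}(\mathcal{B})$. The trace identity immediately gives $\Phi([A,B])=\Phi(AB)-\Phi(BA)=0$ for any $A,B\in\mathcal{B}$. Consequently, if some $c\in\mathcal{Z}(\mathcal{B})$ has the form $c=\sum_{i=1}^{n}[A_i,B_i]$, then applying $\Phi$ and using that it fixes central elements yields $c=\Phi(c)=\sum_{i=1}^{n}\Phi([A_i,B_i])=0$. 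This is exactly condition $(4)$ of Theorem 3.1, so that theorem applies and every local Lie derivation $\varphi$ on $\mathcal{B}$ is a Lie derivation.

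If one prefers a self-contained argument rather than a citation of Theorem 3.1, the same trace computation can be inserted directly into the earlier proof at the point where uniqueness of the pointwise decomposition $\varphi(A)=[A,T_A]+\tau_A(A)$ is needed: the difference of two candidate decompositions is an element of $\mathcal{Z}(\mathcal{B})$ that is simultaneously a sum of commutators, and the displayed trace identity forces it to vanish. From there the construction of the global maps $\delta(A)=[A,T_A]$ and $\tau(A)=\tau_A(A)$, the verification of their linearity, and the appeal to condition $(3)$ to upgrade the local derivation $\delta$ to a derivation all proceed exactly as before.

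I do not expect a genuine obstacle here, since the mathematical substance reduces to the single observation that a center-valued trace annihilates commutators while fixing the center, which jointly preclude nonzero central sums of commutators. The only point demanding care is to confirm that the definition of center-valued trace in force really supplies both of the two properties used above (the trace identity and the fact that $\Phi$ is the identity on $\mathcal{Z}(\mathcal{B})$); everything else is inherited from Theorem 3.1.
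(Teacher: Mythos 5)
Your proposal is correct and follows essentially the same route as the paper: the paper's proof of this theorem is exactly the one-line reduction to Theorem 3.1, asserting that the existence of a center-valued trace prevents any nonzero central element from being a sum of commutators. Your write-up simply makes explicit the short computation (the trace kills commutators and fixes the center) that the paper leaves implicit.
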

\begin{proof}
Since there exists a center-valued trace on $\mathcal{B}$, every nonzero element in $\mathcal{Z}(\mathcal{B})$
 can not be written in the form $\sum_{i=1}^{n}[A_i,B_i]$, where $A_i,B_i\in \mathcal{B}.$ By Theorem 3.1, the result follows.
\end{proof}

\begin{corollary}
Each local Lie derivation on a finite von Neumann algebra $\mathcal{M}$ is a Lie derivation.
\end{corollary}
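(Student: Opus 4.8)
The plan is to deduce this from Theorem 3.4 by checking that every finite von Neumann algebra $\mathcal{M}$ satisfies its four hypotheses; three of these hold for an arbitrary von Neumann algebra and make no use of finiteness. Since $\mathcal{M}$ is in particular a C*-algebra, condition $(1)$ is exactly the Mathieu--Villena theorem \cite{M. Mathieu} that every Lie derivation on a C*-algebra is standard. Condition $(2)$, the inner derivation property, is the classical Kadison--Sakai theorem. Condition $(3)$ is Kadison's result, recalled in the introduction, that every local derivation on a von Neumann algebra is a derivation \cite{R. Kadison}.

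The only step that genuinely invokes finiteness is condition $(4)$, and this is where I expect the real content to sit. Here I would appeal to the structure theorem for finite von Neumann algebras: every finite von Neumann algebra admits a (unique) faithful normal center-valued trace $\Phi:\mathcal{M}\to\mathcal{Z}(\mathcal{M})$ satisfying $\Phi(AB)=\Phi(BA)$ for all $A,B\in\mathcal{M}$ together with $\Phi|_{\mathcal{Z}(\mathcal{M})}=\mathrm{id}$. This is precisely the object demanded by condition $(4)$ of Theorem 3.4, and its existence is one of the standard characterizations of finiteness in the von Neumann algebra setting, so no fresh argument is needed beyond quoting it correctly. The role it plays, via the proof of Theorem 3.4 through Theorem 3.1, is to guarantee that no nonzero central element is a finite sum of commutators $\sum_{i=1}^{n}[A_i,B_i]$, which is what forces the decomposition of $\varphi$ into a derivation plus a central part to be unique and well defined.

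With all four hypotheses verified, Theorem 3.4 applies directly and yields that every local Lie derivation on $\mathcal{M}$ is a Lie derivation. The only bookkeeping I would double-check is that conditions $(1)$--$(3)$, stated for general algebras in the theorem, are available in the exact form needed for von Neumann algebras, and that the cited center-valued trace theorem covers the finite case without an extra separability or $\sigma$-finiteness restriction; modulo that routine verification, the proof is immediate.
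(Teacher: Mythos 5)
Your proof is correct and follows essentially the same route as the paper: the paper's own proof simply observes that finiteness gives a center-valued trace (condition (4)) and invokes Theorem 3.4, leaving conditions (1)--(3) implicit as standard facts about von Neumann algebras, exactly the facts you verify. One small citation correction: condition (3) without a continuity hypothesis is Johnson's theorem \cite{B. Johnson} rather than Kadison's \cite{R. Kadison} (Kadison's original result assumed continuity of the local derivation), but this does not affect the argument.
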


\begin{proof}
Since $\mathcal{M}$ is finite, there exists a central valued trace on $\mathcal{M}$. By Theorem 3.4, the result follows.
\end{proof}

\begin{remark}
For a properly infinite von Neumann algebra $\mathcal{M}$, since every element in $\mathcal{M}$ has the form $\sum_{i=1}^{n}[A_i,B_i]$, where $A_i,B_i\in \mathcal{M}$(see, \cite{M. Bresar}), all Lie derivations are derivations, so every local Lie derivation is a Lie derivation.
\end{remark}

Also as applications of Theorem 3.4, we can immediately obtain the following two corollaries.
\begin{corollary}
Each local Lie derivation on a UHF C*-algebra $\mathcal{A}$ is a Lie derivation.
\end{corollary}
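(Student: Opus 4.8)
The plan is to obtain the corollary as a direct application of Theorem 3.4, so the whole task reduces to verifying that a UHF C*-algebra $\mathcal{A}$ satisfies the four hypotheses (1)--(4) of that theorem. Recall that a UHF algebra is the norm closure of an increasing union $\bigcup_k M_{n_k}(\mathbb{C})$ of full matrix algebras (with $n_k \mid n_{k+1}$); in particular it is a unital simple C*-algebra carrying a unique tracial state $\tau$, and its center is $\mathcal{Z}(\mathcal{A}) = \mathbb{C}I$.

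Conditions (1) and (3) are inherited from the general C*-algebra theory already quoted in the introduction. For (1), the theorem of Mathieu and Villena \cite{M. Mathieu} guarantees that every Lie derivation on a C*-algebra is standard. For (3), Johnson's theorem \cite{B. Johnson} gives that every local derivation from a C*-algebra into its Banach bimodule -- in particular every local derivation on $\mathcal{A}$ itself -- is a derivation. Neither step uses any special feature of UHF algebras beyond their being C*-algebras.

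For condition (2), I would invoke Sakai's theorem that every derivation of a simple unital C*-algebra is inner: since a UHF algebra is simple and unital, this applies verbatim. Finally, for condition (4) I would exhibit the center-valued trace explicitly. Because $\mathcal{Z}(\mathcal{A}) = \mathbb{C}I$ and $\mathcal{A}$ has a unique tracial state $\tau$, the map $T\colon \mathcal{A} \to \mathcal{Z}(\mathcal{A})$ defined by $T(a) = \tau(a)I$ is linear, restricts to the identity on $\mathbb{C}I$, and satisfies $T(ab) = \tau(ab)I = \tau(ba)I = T(ba)$; thus it is a center-valued trace. With all four conditions in hand, Theorem 3.4 yields the conclusion.

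The verification is essentially bookkeeping, so there is no deep obstacle; if anything, the point demanding the most care is matching the abstract hypothesis (4) to the concrete structure of $\mathcal{A}$. One must confirm both that the center is exactly $\mathbb{C}I$ (which follows from simplicity) and that the unique trace furnishes a genuine center-valued trace in the sense used in Theorem 3.4. Once this is settled, the commutator-annihilating property $T([A,B]) = 0$ -- which is what the underlying Theorem 3.1 actually exploits through condition (4) -- is automatic, and no further work is needed.
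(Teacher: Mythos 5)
Your proposal is correct and follows exactly the route the paper intends: the paper states this corollary as an immediate application of Theorem 3.4, and your verification of hypotheses (1)--(4) --- Mathieu--Villena for standardness, Sakai for innerness of derivations on the simple unital algebra $\mathcal{A}$, Johnson for local derivations, and the scalar-valued unique trace $a \mapsto \tau(a)I$ serving as the center-valued trace since $\mathcal{Z}(\mathcal{A}) = \mathbb{C}I$ --- is precisely the bookkeeping the paper leaves implicit.
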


\begin{corollary}
Each local Lie derivation on the Jiang-Su algebra $\mathcal{Z}$ is a Lie derivation.
\end{corollary}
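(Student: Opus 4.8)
The plan is to deduce this from Theorem 3.4 by checking that the Jiang--Su algebra $\mathcal{Z}$ meets all four hypotheses, so the argument should reduce to assembling known structural facts about $\mathcal{Z}$. Two of the conditions hold for every C*-algebra: since $\mathcal{Z}$ is a C*-algebra, condition (1) is exactly the Mathieu--Villena theorem that every Lie derivation on a C*-algebra is standard, and condition (3) is Johnson's theorem that every local derivation from a C*-algebra into one of its Banach bimodules is a derivation. Thus (1) and (3) require nothing special about $\mathcal{Z}$ beyond its being a C*-algebra.

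The remaining two conditions are where the specific structure of $\mathcal{Z}$ enters. For condition (2), I would invoke Sakai's theorem that every derivation of a simple unital C*-algebra is inner; since $\mathcal{Z}$ is both simple and unital, each derivation on $\mathcal{Z}$ is inner. For condition (4), I would use that simplicity of $\mathcal{Z}$ forces the center to be $\mathcal{Z}(\mathcal{Z})=\mathbb{C}I$, while $\mathcal{Z}$ carries a unique tracial state $\tau$; setting $T(a)=\tau(a)I$ then produces a center-valued trace on $\mathcal{Z}$, the trace identity $T(ab)=T(ba)$ and the normalization $T(I)=I$ being immediate from the corresponding properties of $\tau$.

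With all four hypotheses verified, Theorem 3.4 applies directly and yields that every local Lie derivation on $\mathcal{Z}$ is a Lie derivation. I expect the only points needing care to be the verifications of conditions (2) and (4), since these are the two that genuinely use that $\mathcal{Z}$ is a simple unital C*-algebra with a unique trace, whereas (1) and (3) are generic C*-algebra facts. In particular, one should confirm that simplicity gives the trivial center needed to realize the unique tracial state as a center-valued trace, which is precisely the mechanism by which condition (4) --- the impossibility of writing a nonzero central element as a sum of commutators --- is secured.
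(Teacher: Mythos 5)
Your proposal is correct and follows exactly the route the paper intends: the paper derives this corollary as an immediate application of Theorem 3.4, and your verification of the four hypotheses (Mathieu--Villena for (1), Sakai's theorem via simplicity and unitality for (2), Johnson's theorem for (3), and the unique tracial state realized as a center-valued trace $T(a)=\tau(a)I$ on the trivial center for (4)) is precisely the verification the paper leaves implicit.
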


\begin{theorem}
Let $\mathcal{B}$ be an algebra over $\mathbb{C}$ satisfying the following conditions:\\
$(1)$ each Lie derivation on $\mathcal{B}$ is standard;\\
$(2)$ each derivation on $\mathcal{B}$ is inner;\\
$(3)$ each 2-local derivation on $\mathcal{B}$ is a derivation;\\
$(4)$ $\mathcal{Z}(\mathcal{B})=\mathbb{C}I$, where $\mathcal{Z}(\mathcal{B})$ is the center of $\mathcal{B}$;\\
$(5)$ every element in $\mathcal{B}$ can be written in the form $\lambda I+\sum_{i=1}^{n}[A_i,B_i]$, where $\lambda\in \mathbb{C}$ and $A_i,B_i\in \mathcal{B}.$\\
Then each 2-local Lie derivation $\varphi$ on $\mathcal{B}$ is a Lie derivation.
\end{theorem}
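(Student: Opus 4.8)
The plan is to peel a central-valued ``trace part'' off $\varphi$ and reduce the remainder to the already-settled 2-local derivation problem, i.e. to condition $(3)$. Write $K$ for the linear span of all commutators $[A,B]$ in $\mathcal{B}$; by $(5)$ we have $\mathcal{B}=\mathbb{C}I+K$. I would first dispose of the degenerate case $I\in K$: then $\mathcal{B}=K$, so any linear map into $\mathcal{Z}(\mathcal{B})$ that kills commutators vanishes identically. Hence every standard Lie derivation is already a derivation, $\varphi$ is a 2-local derivation, and $(3)$ finishes the argument. So I may assume $I\notin K$, which gives a direct sum $\mathcal{B}=\mathbb{C}I\oplus K$ and a linear functional $\mathrm{tr}:\mathcal{B}\to\mathbb{C}$ determined by $\mathrm{tr}(I)=1$ and $\mathrm{tr}|_{K}=0$. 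Note $\mathrm{tr}([A,B])=0$, and, since any derivation $D$ satisfies $D(I)=0$ and sends each $\lambda I+\sum[A_i,B_i]$ into $K$, we get $\mathrm{tr}(D(x))=0$ for every derivation $D$ and every $x$.

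With this in hand I would set $\tau(x)=\mathrm{tr}(\varphi(x))\,I$ and $\delta(x)=\varphi(x)-\tau(x)$, so that $\mathrm{tr}(\delta(x))=0$, i.e. $\delta(x)\in K$. The key step is to show that $\delta$ is a 2-local derivation. Fix $a,b$ and choose a Lie derivation $\varphi_{a,b}$ agreeing with $\varphi$ at $a$ and $b$; by $(1)$, $(2)$, $(4)$ write $\varphi_{a,b}=D_{a,b}+\tau_{a,b}$ with $D_{a,b}$ an inner derivation and $\tau_{a,b}$ a linear map into $\mathbb{C}I$ that annihilates commutators. Since the space of linear functionals vanishing on $K$ is one-dimensional and spanned by $\mathrm{tr}$, there is a scalar $\alpha_{a,b}$ with $\tau_{a,b}(x)=\alpha_{a,b}\mathrm{tr}(x)I$ for all $x$. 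Applying $\mathrm{tr}$ to $\varphi(a)=D_{a,b}(a)+\tau_{a,b}(a)$ and using $\mathrm{tr}(D_{a,b}(a))=0$ yields $\mathrm{tr}(\varphi(a))=\alpha_{a,b}\mathrm{tr}(a)$, hence $\tau_{a,b}(a)=\tau(a)$ and therefore $\delta(a)=D_{a,b}(a)$; likewise $\delta(b)=D_{a,b}(b)$. Thus $\delta$ is a 2-local derivation, and by $(3)$ it is a genuine (linear) derivation.

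It remains to prove that $\tau$ is linear, which I expect to be the real obstacle, since a 2-local map carries no built-in additivity and so the additivity of the central part must be extracted by hand. Put $h(x)=\mathrm{tr}(\varphi(x))$. The computation above shows that for \emph{each} pair $a,b$ there is a single scalar $\alpha_{a,b}$ with $h(a)=\alpha_{a,b}\mathrm{tr}(a)$ and $h(b)=\alpha_{a,b}\mathrm{tr}(b)$ simultaneously. Specializing $b=I$ forces $\alpha_{a,I}=h(I)$, whence $h(a)=h(I)\mathrm{tr}(a)$ for every $a$; that is, $h=h(I)\,\mathrm{tr}$ is linear. Consequently $\tau(x)=h(I)\mathrm{tr}(x)\,I$ is a linear map into $\mathbb{C}I$ with $\tau[A,B]=0$.

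Assembling the pieces, $\varphi=\delta+\tau$ with $\delta$ a derivation and $\tau$ a linear central-valued map vanishing on commutators; in particular $\varphi$ is linear, and a direct check (central values commute with everything) confirms it is a standard Lie derivation, hence a Lie derivation. I would flag two pressure points in the write-up: the consistent identification $\tau_{a,b}(a)=\tau(a)$, which rests on the one-dimensionality of the space of commutator-annihilating functionals produced by $(4)$ and $(5)$, and the closing proportionality argument that upgrades the pointwise relation between $h$ and $\mathrm{tr}$ to genuine linearity of the central part.
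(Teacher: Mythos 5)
Your proposal is correct; I could not find a gap. Every step checks out: with $K$ the span of commutators, condition (5) gives $\mathcal{B}=\mathbb{C}I+K$, the degenerate case $I\in K$ collapses to condition (3) exactly as you say, and when $I\notin K$ your functional $\mathrm{tr}$ is well defined, kills the range of every derivation (inner ones by (2) land in $K$, or by Leibniz), and forces $\tau_{a,b}=\alpha_{a,b}\,\mathrm{tr}(\cdot)I$; applying $\mathrm{tr}$ then identifies $\tau_{a,b}(a)$ with $\tau(a)$, so $\delta=\varphi-\tau$ agrees with the inner derivation $D_{a,b}$ at both points, is a 2-local derivation, and is a derivation by (3); pairing with $b=I$ gives $h=h(I)\,\mathrm{tr}$ and hence linearity of $\tau$ in one stroke.

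Your route differs from the paper's in both key steps, and is cleaner. The paper never introduces an explicit functional: it defines $\delta(A)=\delta_A(A)$ and $\tau(A)=\tau_A(A)$ pointwise from chosen standard decompositions, first proving (as in its Theorem 3.1) that the decomposition of $\varphi(A)$ at each point is unique because no nonzero central element is a sum of commutators; 2-locality of $\delta$ then follows from that uniqueness, which is the counterpart of your ``apply $\mathrm{tr}$'' identification. The real divergence is in the linearity of the central part: the paper proves additivity of $\tau$ by comparing the witnesses for the pairs $(A,B)$ and $(A,A+B)$, extracting equality of $\tau_{A,B}$ and $\tau_{A,A+B}$ at $I$, and this requires a case split according to whether the central coefficient $\lambda$ of $A$ (or $B$) vanishes. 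Your argument replaces all of this with the single observation that pairing any $a$ with $I$ forces $h(a)=h(I)\mathrm{tr}(a)$, which not only gives linearity without case analysis but yields the explicit formula $\tau=\mathrm{tr}(\varphi(I))\,\mathrm{tr}(\cdot)\,I$, showing the central part of a 2-local Lie derivation is automatically a scalar multiple of the trace functional. The paper's version buys uniformity with the rest of its Section 3 (it reuses the uniqueness lemma from Theorem 3.1); yours buys brevity, an explicit description of $\tau$, and a transparent accounting of exactly where hypotheses (4) and (5) enter, namely in making $K$ a codimension-one subspace with one-dimensional annihilator.
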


\begin{proof}
Similarly with the foregoing discussion, if $I$ can be written in the form $\sum_{i=1}^{n}[A_i,B_i]$, then the result is trival. We assume that $I$ can not be written in the form $\sum_{i=1}^{n}[A_i,B_i]$. In this case, we have proved that $\varphi$ has a unique decomposition at each point $A$ in $\mathcal{B}$, i.e.
$$\varphi(A)=\varphi_A(A)=\delta_A(A)+\tau_A(A),$$
where $\varphi_{A}$ is a Lie derivation, $\delta_A$ is a derivation and $\tau_A$ is a linear mapping from $\mathcal{B}$ into $\mathcal{Z}(\mathcal{B})$ such that $\tau_A[X,Y]=0$, for all $X,Y\in \mathcal{B}.$
Similarly we can define
$$\delta(A)=\delta_A(A)~~\mathrm{and}~~ \tau(A)=\tau_A(A),$$
for all $A\in \mathcal{B}$.
Now it is sufficient to show that $\delta$ is a derivation and $\tau$ is linear.
Given $A$ and $B$ in $\mathcal{B}$, there exists a Lie derivation $\varphi_{A,B}$ such that
$$\varphi(A)=\varphi_{A,B}(A)=\delta_{A,B}(A)+\tau_{A,B}(A),$$ and
$$\varphi(B)=\varphi_{A,B}(B)=\delta_{A,B}(B)+\tau_{A,B}(B),$$ where $\delta_{A,B}$ + $\tau_{A,B}$ is the standard decomposition of $\varphi_{A,B}$.
By the uniqueness of the decomposition, $\delta(A)=\delta_{A,B}(A)$ and $\delta(B)=\delta_{A,B}(B)$. Hence $\delta$ is a 2-local derivation and thus a derivation.
Now Our task is to show that $\tau$ is linear. It is easy to see that $\varphi$ is homogeneous, and so is $\tau$. If both $A$ and $B$ have the form $\sum_{i=1}^{n}[A_i,B_i]$, then the equation
$\tau(A+B)=\tau(A)+\tau(B)$ is trival. Otherwise, suppose $A=\lambda I+\sum_{i=1}^{n}[A_i,B_i],$ where $\lambda\neq0$. Take a Lie derivation $\varphi_{A,A+B}$ such that
$$\varphi(A)=\varphi_{A,A+B}(A)=\delta_{A,A+B}(A)+\tau_{A,A+B}(A),$$ and
$$\varphi(A+B)=\varphi_{A,A+B}(A+B)=\delta_{A,A+B}(A+B)+\tau_{A,A+B}(A+B),$$ where $\delta_{A,A+B}$ + $\tau_{A,A+B}$ is the standard decomposition of $\varphi_{A,A+B}$.
 From $$\tau(A)=\tau_{A,B}(A)=\tau_{A,A+B}(A),$$
we have $\tau_{A,B}(I)=\tau_{A,A+B}(I)$, and thus
$$\tau_{A,B}(B)=\tau_{A,A+B}(B).$$
Now we can obtain that
\begin{align*}
\tau(A+B)&=\tau_{A,A+B}(A+B)=\tau_{A,A+B}(A)+\tau_{A,A+B}(B)\\
&=\tau(A)+\tau_{A,B}(B)=\tau(A)+\tau(B).
\end{align*}
Hence $\tau$ is additive, and so linear. The proof is complete.
\end{proof}

Directly applying the above theorem, we can obtain the following corollaries.

\begin{corollary}
Each 2-local Lie derivation on the Jiang-Su algebra is a Lie derivation.
\end{corollary}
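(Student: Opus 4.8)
The plan is to verify, for $\mathcal{B}=\mathcal{Z}$ the Jiang-Su algebra, each of the five hypotheses of Theorem 3.9; the conclusion then follows immediately. Recall that $\mathcal{Z}$ is a unital, simple, separable, nuclear C*-algebra with a unique tracial state $\tau$. First, since $\mathcal{Z}$ is a C*-algebra, the theorem of Mathieu and Villena \cite{M. Mathieu} shows that every Lie derivation on $\mathcal{Z}$ is standard, which is condition $(1)$. Second, because $\mathcal{Z}$ is simple and unital, Sakai's theorem that every derivation of a simple unital C*-algebra is inner supplies condition $(2)$. Third, condition $(3)$ is exactly the content of Corollary 2.20, already established through the traceable-representation machinery of Theorem 2.18 (the faithful trace $\tau$ yields a faithful finite, hence traceable, representation of $\mathcal{Z}$). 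Fourth, simplicity together with unitality forces $\mathcal{Z}(\mathcal{Z})=\mathbb{C}I$, giving condition $(4)$.

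It remains to establish condition $(5)$: every element of $\mathcal{Z}$ has the form $\lambda I+\sum_{i=1}^{n}[A_i,B_i]$. Since $\tau$ is the unique trace, the map $x\mapsto\tau(x)I$ is the center-valued trace on $\mathcal{Z}$, and as every commutator lies in $\ker\tau$ while $\tau(I)=1$, the scalar is forced to be $\lambda=\tau(x)$. Thus the problem reduces to showing that every element $y$ with $\tau(y)=0$ is a \emph{finite} sum of commutators, not merely a norm limit of such. I would obtain this from the structure of $\mathcal{Z}$ as an inductive limit of prime dimension-drop algebras together with its stable rank one, strict comparison, and unique trace: in each matrix-valued building block every trace-zero element is a single commutator, and the comparison and unique-trace properties of $\mathcal{Z}$ let one assemble a uniformly bounded number of commutators representing $y$ exactly. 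The sharper theory of finite sums of commutators in unital C*-algebras with a unique tracial state provides precisely this conclusion for $\mathcal{Z}$.

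The main obstacle is condition $(5)$. The difficulty is that the Cuntz–Pedersen description identifies only the \emph{closed} linear span of commutators with the kernel of the trace, whereas Theorem 3.9 demands an exact, finite representation $y=\sum_{i=1}^{n}[A_i,B_i]$ with no closure taken. Overcoming this gap is the whole point of the argument, and it is where the special regularity of $\mathcal{Z}$—its $\mathcal{Z}$-stability and the resulting good comparison of positive elements—must be exploited. By contrast, conditions $(1)$–$(4)$ are routine consequences of well-known structural facts about $\mathcal{Z}$ and of results already proved earlier in the paper, so once condition $(5)$ is in hand the corollary is complete.
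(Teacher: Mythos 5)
Your proposal follows exactly the paper's route: the paper states this corollary as a direct application of Theorem 3.9 with no further justification, and your verification of conditions $(1)$--$(5)$ (Mathieu--Villena for standardness, Sakai's theorem for innerness, Corollary 2.20 of the paper for 2-local derivations, simplicity for the trivial center) supplies precisely the facts the authors implicitly rely on. Your identification of condition $(5)$ --- exact finite sums of commutators rather than the Cuntz--Pedersen closed span --- as the only nontrivial point is accurate, and it is indeed settled by the known results on finite sums of commutators in pure ($\mathcal{Z}$-stable) unital C*-algebras with a unique tracial state, so the sketch you give is the right argument and is, if anything, more detailed than what the paper provides.
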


\begin{corollary}
Each 2-local Lie derivation on a UHF C*-algebra is a Lie derivation.
\end{corollary}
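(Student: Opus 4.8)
The plan is to deduce the corollary from Theorem 3.9 by checking that a UHF C*-algebra $\mathcal{A}$ satisfies its five hypotheses. Recall that $\mathcal{A}$ is the inductive limit of an increasing chain of full matrix algebras $M_{n_1}\subseteq M_{n_2}\subseteq\cdots$ with unital connecting maps; it is a simple, unital, nuclear C*-algebra possessing a unique tracial state $\tau$, and the algebraic union $\mathcal{A}_0=\bigcup_k M_{n_k}$ is norm dense in $\mathcal{A}$.

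First I would dispose of conditions (1)--(4), each of which is either a known theorem or an earlier result in this paper. Condition (1) is the Mathieu--Villena theorem that every Lie derivation on a C*-algebra is standard (cited in the introduction). Condition (2) holds because $\mathcal{A}$ is a simple unital C*-algebra, so by Sakai's theorem every derivation on $\mathcal{A}$ is inner. Condition (3) is precisely Corollary 2.19, which has already been established. Condition (4) follows from simplicity: a simple unital C*-algebra has center $\mathbb{C}I$.

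The remaining condition (5)---that every element of $\mathcal{A}$ can be written as $\lambda I+\sum_{i=1}^{n}[A_i,B_i]$---is where the real work lies, and I expect it to be the main obstacle. On the dense subalgebra $\mathcal{A}_0$ it is immediate: if $a\in M_{n_k}$, then since the connecting maps are unital $\tau$ restricts to the normalized trace on $M_{n_k}$, so $a-\tau(a)I$ is a trace-zero matrix and hence a commutator in $M_{n_k}$ by Shoda's theorem; thus $a=\tau(a)I+[X,Y]$ with $X,Y\in M_{n_k}\subseteq\mathcal{A}$. The difficulty is that a general element of the completion $\mathcal{A}$ need not lie in any single stage $M_{n_k}$, and the algebraic (finite) span of commutators, while dense in $\ker\tau$, is not obviously all of $\ker\tau$; one cannot naively pass to the limit because the matrices $X,Y$ realizing the commutators may have norms blowing up along the chain. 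To close this gap I would invoke the commutator-decomposition results for C*-algebras with good comparison properties (for instance the work of Fack and of Marcoux on finite sums of commutators), which guarantee that in a UHF algebra every trace-zero element is a finite sum of commutators with a uniformly bounded number of terms. Granting this, every $a\in\mathcal{A}$ has the form $\tau(a)I+\sum_{i=1}^{n}[A_i,B_i]$, establishing (5).

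With conditions (1)--(5) in hand, Theorem 3.9 applies directly and yields that every 2-local Lie derivation on $\mathcal{A}$ is a Lie derivation, completing the proof.
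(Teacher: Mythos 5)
Your proposal is correct and takes essentially the same route as the paper, which proves this corollary simply by invoking Theorem 3.9 with no further comment. In fact, your treatment of condition (5) --- correctly observing that norm-density of the trace-zero commutator sums coming from the finite stages $\bigcup_k M_{n_k}$ does not by itself give the required \emph{algebraic} decomposition, and closing the gap with the Fack/Marcoux results on finite sums of commutators in UHF algebras --- supplies the one nontrivial verification that the paper leaves entirely implicit.
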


\begin{corollary}
Each 2-local Lie derivation on a factor von Neumann algebra is a Lie derivation.
\end{corollary}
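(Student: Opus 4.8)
The plan is to realize this corollary as a direct application of Theorem 3.9: I would take $\mathcal{M}$ to be an arbitrary factor von Neumann algebra and verify that it satisfies the five hypotheses of that theorem, after which the conclusion follows verbatim. Four of the five conditions are classical and require only the correct references; the genuinely delicate point is condition (3), which asserts that every 2-local derivation on $\mathcal{M}$ is a derivation, since this must be secured uniformly across all factor types.

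First I would dispose of conditions (1), (2), (4) and (5). Condition (4), $\mathcal{Z}(\mathcal{M})=\mathbb{C}I$, is precisely the defining property of a factor. Condition (1), that every Lie derivation on $\mathcal{M}$ is standard, is the theorem of M. Mathieu and A. Villena \cite{M. Mathieu} that Lie derivations on C*-algebras are standard. Condition (2), that every derivation on $\mathcal{M}$ is inner, is the classical result on von Neumann algebras (a factor being in particular a von Neumann algebra). Condition (5), that every element of $\mathcal{M}$ can be written as $\lambda I+\sum_{i=1}^{n}[A_i,B_i]$, is exactly the fact already invoked in the proof of Corollary 3.3 and attributed there to \cite{M. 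Bresar}.

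The remaining and most delicate point is condition (3). The cleanest route is to quote the theorem of Ayupov and Kudaybergenov \cite{S. Ayupov 3, S. Ayupov 4} that every 2-local derivation on an arbitrary von Neumann algebra is a derivation, which settles condition (3) for all factors at once. If instead one wishes to argue from the machinery developed in Section 2, one would split by type: for factors of type II, type III and type $I_n$ with $n>2$ the result follows from Theorem 2.1 together with the innerness of derivations, but the type $I_2$ factor is $M_2(\mathbb{C})$, to which Theorem 2.1 does not apply since it requires $n>2$; here one must instead cite the direct computation of S. Kim and J. Kim \cite{S. Kim}, valid for every $M_n(\mathbb{C})$ including $n=2$. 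This type $I_2$ gap is precisely the main obstacle, and it is the reason the global Ayupov--Kudaybergenov theorem is preferable to a type-by-type patch.

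With all five conditions established, Theorem 3.9 yields that every 2-local Lie derivation on $\mathcal{M}$ is a Lie derivation. It is worth noting that the proof of Theorem 3.9 internally dichotomizes on whether $I$ is a sum of commutators, and this dichotomy is transparent for factors: in a properly infinite factor $I$ is such a sum, every element is a sum of commutators, and so every Lie derivation is already a derivation, making the conclusion immediate; in a finite factor the faithful trace $\tau$ satisfies $\tau(I)=1$ while $\tau$ annihilates every commutator, so $I$ is not a sum of commutators and the nontrivial branch of the argument applies. Either way the hypotheses are met and the result follows.
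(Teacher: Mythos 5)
Your proposal is correct and takes essentially the same approach as the paper: the paper offers no written proof beyond the phrase ``directly applying the above theorem,'' and your verification of the five hypotheses of Theorem 3.9 (factoriality for condition (4), Mathieu--Villena for (1), innerness of derivations on von Neumann algebras for (2), Bre\v{s}ar--Kissin--Shulman for (5), and Ayupov--Kudaybergenov for (3)) is exactly the intended argument. Your additional remarks on the type $I_2$ gap in the paper's own Section 2 machinery and on the commutator dichotomy inside Theorem 3.9 are accurate and go beyond what the paper records; the only nitpick is that the Bre\v{s}ar citation appears in the proof of Corollary 3.2 (the factor case), not Corollary 3.3 (the nest algebra case).
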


\begin{example}
If $\mathcal{M}$ is a finite von Neumann algebra but not a factor, then there exists a 2-local Lie derivation on $\mathcal{M}$ but not a Lie derivation.
\end{example}
\begin{proof}
There exist a center-valued trace $\tau$ on $\mathcal{M}$ and a non-trival central projection $P$ in $\mathcal{M}$.
Suppose the Hilbert space that $\mathcal{M}$ acts on is $H$. Take two unital vectors $x$ and $y$ in $PH$ and $(I-P)H$, respectively. For each element
$M\in \mathcal M$, denote $<\tau(M)x,x>$ and $<\tau(M)y,y>$ by $x_M$ and $y_M$, respectively.

Define a functional $f$ on $\mathcal{M}$ by follows:
$f(M)$ equals to $\frac{x_M^2}{y_M}$ if $y_M\neq0$, else $0$, and define a mapping $\delta$ from $\mathcal{M}$ into $\mathcal{Z}(\mathcal{M})$ by $\delta(M)=f(M)P$.
Obviously, $\delta$ is not linear and thus not a Lie derivation. Now we only need to check that $\delta$ is a 2-local Lie derivation, i.e. for each $A,B$ in $\mathcal{M}$,
there exists a Lie derivation $\phi$ on $\mathcal{M}$ such that $\phi(A)=\delta(A)$ and $\phi(B)=\delta(B)$.

\textbf{case1} $x_Ay_B\neq x_By_A$.

Take $$\phi(M)=\frac{(f(A)y_B-f(B)y_A)x_M+(f(B)x_A-f(A)x_B)y_M}{x_Ay_B-x_By_A}P.$$
It is not difficult to check that $\phi$ is a linear mapping from $\mathcal{M}$ into $\mathcal{Z}(\mathcal{M})$ and $\phi([X,Y])=0$ for all $X,Y$ in $\mathcal{M}$. Hence
$\phi$ is a Lie derivation. Moreover, $\phi(A)=\delta(A)$ and $\phi(B)=\delta(B)$.

\textbf{case2} $x_Ay_B= x_By_A$.

If both $y_A$ and $y_B$ are zero, then it is sufficient to take $\phi\equiv0$. Otherwise, without loss of generality, we may assume $y_A\neq0$, then take $\phi(M)=\frac{x_Ax_M}{y_A}P$.
One can easily check that $\phi$ is a Lie derivation and $\phi(A)=\delta(A)$. If $y_B=0$, then $x_B=\frac{x_Ay_B}{y_A}=0$ and $\phi(B)=0=\delta(B)$.
If $y_B\neq0$, then $\phi(B)=\frac{x_Ax_B}{y_A}P=\frac{x_B^2}{y_B}P=\delta(B)$.

The proof is complete.
\end{proof}

\emph{Acknowledgements}. This paper was partially supported by National Natural Science Foundation of China(Grant No. 11371136).


\begin{thebibliography}{50}
\bibitem{R. Alizadeh} \textsc{R. Alizadeh},
Jordan derivations of full matrix algebras,
\textit{Linear Algebra Appl.}
430 (2009), 574-578

\bibitem{S. Ayupov 1} \textsc{S. Ayupov, K. Kudaybergenov},
2-Loacl derivations and automorphisms on $B(H)$,
\textit{J. Math. Anal. Appl.}
395(1)(2012), 15-18.

\bibitem{S. Ayupov 2} \textsc{S. Ayupov, K. Kudaybergenov, B. Nurjanov, A. Alauatdinov},
Local and 2-local derivations on noncommutative Arens algebras,
\textit{Mathematica Slovaca}
64(2014), 423-432.

\bibitem{S. Ayupov 3} \textsc{S. Ayupov, F. Arzikulov},
2-Local derivations on semi-finite von Neumann algebras,
\textit{Glasgow Math. J.}
56(2014), 9-12.

\bibitem{S. Ayupov 4} \textsc{S. Ayupov, K. Kudaybergenov},
2-Local derivations on von Neumann algebras,
\textit{Positivity}
19(3)(2014), 445-455.

\bibitem{S. Ayupov 5} \textsc{S. Ayupov, K. Kudaybergenov},
2-Local derivations on matrix algebras over semi-prime Banach algebras and on AW*-algebras,
\textit{Journal of Physics Conference Series}
697(2016).

\bibitem{M. Bresar} \textsc{M. Bre$\check{s}$ar, E. Kissin, S. Shulman},
Lie ideals: from pure algebra to C*-algebras,
\textit{J. reine angew. Math.}
623 (2008), 73¡ª121.

\bibitem{L. Chen2} \textsc{L. Chen, F. Lu},
Local Lie derivations of nest algebras,
\textit{Linear Algebra Appl.}
475(2015), 62-72.

\bibitem{L. Chen} \textsc{L. Chen, F. Lu, T. Wang},
Local and 2-local Lie derivations of operator algebras on Banach spaces,
\textit{Integr. Equ. Oper. Theory}
77(2013), 109-121.

\bibitem{W. Cheung} \textsc{W. Cheung},
Lie derivations of triangular algebra,
\textit{Linear Multilinear Algebra}
51(2003), 299-310.

\bibitem{E. Christensen} \textsc{E. Christensen},
Derivations of nest algebras,
\textit{Math. Ann}
229(1977), 155-161.

\bibitem{R. Crist} \textsc{R. Crist},
Local derivations on operator algebras,
\textit{J. Funct. Anal.}
135(1996), 72-92.



\bibitem{F. Gilfeather} \textsc{F. Gilfeather, A. Hopenwasser, D. Larson},
Reflexive algebras with finite width lattices: tensor products, cohomology, compact perturbations,
\textit{J. Funct. Anal.}
55(2)(1984), 176-199.

\bibitem{D. Hadwin} \textsc{D. Hadwin, J. Li},
Local derivations and local automorphisms on some algebras,
\textit{J. Operator Theory}
60(1)(2008), 29-44.

\bibitem{N. Jacobson} \textsc{N. Jacobson, C. Rickart},
Jordan homomorphisms of rings,
\textit{Trans. Amer. Math. Soc.}
69(3)(1950), 479-502.

\bibitem{B. Johnson} \textsc{B. Johnson},
Local derivations on C*-algebras are derivations,
\textit{Trans. Amer. Math. Soc.}
353(2001), 313-325.

\bibitem{R. Kadison} \textsc{R. Kadison},
Local derivations,
\textit{J. Algebra}
130(1990), 494-509.




\bibitem{S. Kim} \textsc{S. Kim, J. Kim},
Local automorphisms and derivations on $M_n(\mathbb{C})$,
\textit{Proc. Amer. Math. Soc.}
132(5)(2004), 1389-1392.

\bibitem{D. Larson} \textsc{D. Larson, A. Sourour},
Local derivations and local automorphisms,
\textit{Proc. Sympos. Pure Math.}
51(1990), 187-194.


\bibitem{J. Li} \textsc{J. Li, H. Pendharkar},
Derivations of certain algebras,
\textit{Internat. J. Math.}
24(5)(2000), 345-349.

\bibitem{L. Liu} \textsc{L. Liu},
2-Local Lie derivations on semi-finite factor von Neumann algebras,
\textit{Linear Multilinear Algebra}
64(2016), 1679-1686.

\bibitem{F. Lu1} \textsc{F. Lu},
Jordan derivations of reflexive algebras,
\textit{Integr. Equ. Oper. Theory}
67(2010), 51-56.

\bibitem{F. Lu2} \textsc{F. Lu},
The Jordan structure of CSL algebras,
\textit{Studia Math. }
190(2009), 283-299.

\bibitem{F. Lu3} \textsc{F. Lu},
Lie derivation of certain CSL algebras,
\textit{Israel J. Math.}
155(2006), 149-156.

\bibitem{L. Marcoux} \textsc{L. Marcoux, A. Sourour},
Conjugation-invariant subspaces and Lie ideals in non-selfadjoint operator algebras,
\textit{J. London Math. Soc.}
65(2)(2002), 493-512.

\bibitem{M. Mathieu} \textsc{M. Mathieu, A. Villena},
The structure of Lie derivations on C*-algebras,
\textit{J. Funct. Anal.}
202(2003), 504-525.

\bibitem{Y. Pang} \textsc{Y. Pang, W. Yang},
Derivations and local derivations on strongly double triangle subspace lattice algebras,
\textit{ Linear Multilinear Algebra}
58(2010), 855-862.

\bibitem{P. Semrl} \textsc{P. $\check{S}$emrl},
Local automorphisms and derivations on $B(H)$,
\textit{Proc. Amer. Math. Soc.}
125(1997), 2677-2680.


\bibitem{V. Runde} \textsc{V. Runde},
Lectures on Amenability,
I. Springer Verlag,
2002.

\bibitem{J. Zhang} \textsc{J. Zhang, H. Li},
2-Loacl derivations on digraph algebras,
\textit{Acta Math. Sinica, Chinese series}
49(2006), 1401-1406.






















\end{thebibliography}
\end{document}